\documentclass{IEEEtran}

\usepackage{cite}
\usepackage{url}
\usepackage{amsmath,amssymb,amsfonts}
\usepackage{multirow}
\usepackage{amsthm}
\usepackage{algorithmic}
\usepackage{graphicx}
\usepackage{textcomp}
\usepackage{epsfig}
\usepackage{algorithmic}
\usepackage{epstopdf}
\usepackage[ruled]{algorithm}
\usepackage{enumerate}
\usepackage{float}
\usepackage{graphics,graphicx}
\usepackage{subfigure}

\usepackage{array}
\usepackage{mathtools}
\usepackage[justification=centering]{caption}
\allowdisplaybreaks[3]
\usepackage{url}
\DeclareGraphicsExtensions{.png}
\graphicspath{{./Pics/}}
\usepackage{color}
\usepackage{xspace}

\usepackage{hyperref}
\hypersetup{
    colorlinks=true,
    linkcolor=blue,
    filecolor=gray,      
    urlcolor=blue,
    citecolor=blue,
}

\def\BibTeX{{\rm B\kern-.05em{\sc i\kern-.025em b}\kern-.08em
		T\kern-.1667em\lower.7ex\hbox{E}\kern-.125emX}}

\newtheorem{lemma}{Lemma}
\newtheorem{theorem}{Theorem}

\newtheorem{assumption}{Assumption}

\begin{document}
	
	\title{\LARGE \bf A Stochastic Second-Order Proximal Method\\for Distributed Optimization}
	
	\author{Chenyang Qiu, Shanying Zhu, Zichong Ou and Jie Lu\thanks{C. Qiu, Z. Ou and J. Lu are with the School of Information Science and Technology, ShanghaiTech University, 201210 Shanghai, China. Email: {\tt \{qiuchy, ouzch, lujie\}@shanghaitech.edu.cn}.}\thanks{S. Zhu is with the Department of Automation and the Key Laboratory of System Control and Information Processing, Shanghai Jiao Tong University, 200240 Shanghai, China. Email: {\tt shyzhu@sjtu.edu.cn}.}}
	\maketitle

	\begin{abstract}
	In this paper, we propose a distributed stochastic second-order proximal method that enables agents in a network to cooperatively minimize the sum of their local loss functions without any centralized coordination. The proposed algorithm, referred to as St-SoPro, incorporates a decentralized second-order approximation into an augmented Lagrangian function, and then randomly samples the local gradients and Hessian matrices of the agents, so that it is computationally and memory-wise efficient, particularly for large-scale optimization problems. We show that for globally restricted strongly convex problems, the expected optimality error of St-SoPro asymptotically drops below an explicit error bound at a linear rate, and the error bound can be arbitrarily small with proper parameter settings. Simulations over real machine learning datasets demonstrate that St-SoPro outperforms several state-of-the-art distributed stochastic first-order methods in terms of convergence speed as well as computation and communication costs.
	\end{abstract}

\section{Introduction}\label{sec:introduction}
Stochastic optimization algorithms have been flourishing recently due to their appealing efficiency in machine learning \cite{bottou2010large, bottou2018optimization}. In the context of large-scale machine learning, parallel stochastic algorithms are often used to process large datasets\cite{mcmahan2017communication, stich2018local}. However, such methods need a central node to ensure the consistency of the variables from all the nodes, so that the communication burden of the central node becomes the bottleneck that restricts the algorithm performance. 

On the other hand, a collection of distributed optimization algorithms have been proposed over the past decade in order to tackle various network control and resource allocation problems, where agents in a network only communicate with their neighbors and do not rely on any central coordination, eliminating potential communication bottlenecks in the computing infrastructure \cite{NedicDGD2009}. Typical methods include the distributed gradient descent (DGD) method \cite{NedicDGD2009}, the decentralized exact first-order algorithm (EXTRA) \cite{shi2015extra}, and the distributed gradient tracking algorithms \cite{Nedi2017AchievingGC}.

Inheriting the merits of the above two algorithm types, distributed stochastic optimization algorithms have been attracting a lot of recent interest. For smooth, strongly convex optimization problems, \cite{pu2021sharp} develops a distributed stochastic gradient descent (DSGD) method based on DGD, which is shown to achieve the optimal sublinear convergence rate (independent of the network) of a centralized stochastic gradient descent (SGD) method \cite{Rakhlin2012stochastic,Nemirovski2009RobustSA}. The same convergence rate is attained by the exact diffusion method with adaptive step-sizes (EDAS) in \cite{huang2022improving}. In addition, the distributed stochastic gradient tracking (DSGT) method proposed in \cite{pu2021distributed} is guaranteed to linearly converge to a neighborhood of the optimal solution in expectation. For non-convex problems, \cite{yi2022primal} designs a distributed primal-dual SGD algorithm with both fixed step-sizes (DPD-SGD-F) and adaptive step-sizes (DPD-SGD-T), where the former achieves linear convergence to suboptimality under the Polyak-{\L}ojasiewics (PL) condition. 

The aforementioned distributed stochastic algorithms are all constructed upon deterministic \emph{first-order} methods by nature and only use stochastic gradients to evolve. As second-order information often leads to more accurate approximation and accelerates problem solving, we endeavor to develop a \emph{second-order} distributed stochastic optimization algorithm. 

To this end, we choose SoPro \cite{wuxuyang_sopro}, a deterministic distributed second-order proximal algorithm, as the cornerstone. SoPro is developed by virtue of a decentralized second-order approximation of the augmented Lagrangian function in the classic method of multipliers \cite{Boyd2011DistributedOA}, and its convergence performance outperforms that of various distributed first-order methods in the deterministic setting. In this paper, we adapt SoPro to the stochastic scenario. Specifically, instead of letting each agent compute exact local gradient and Hessian matrix determined by all its local data as SoPro does, we allow each agent to update using stochastic approximations of its local gradient and Hessian, which come from two randomly and uniformly chosen batches of samples from its local loss function. Such a stochastic variant of SoPro can significantly enhance the computational and memory efficiency of the agents. We call this algorithm a \emph{stochastic second-order proximal algorithm}, referred to as St-SoPro.

Under the assumptions that the local loss functions are smooth and convex, and that their sum (i.e., the global objective function) is globally restricted strongly convex, we show that our proposed St-SoPro algorithm \emph{linearly} converges to a neighborhood of the optimal solution in expectation over undirected networks. In particular, we provide an explicit upper bound on its ultimate suboptimality, and illustrate that this upper bound can be made arbitrarily small as long as the parameters are properly set. Finally, we validate the superior performance of St-SoPro in comparison with several recent distributed stochastic optimization methods over some real datasets for classification problems arising in machine learning with respect to convergence speed, communication load, computational efficiency, and classification accuracy.
	
The paper is organized as follows. Section~II formulates the optimization problem to be solved, and Section~III describes the proposed St-SoPro algorithm. Section~IV provides the convergence analysis, Section~V presents the numerical results, and Section VI concludes the paper.
	
\textbf{Notation:} For any differentiable function $f: \mathbb{R}^{d} \rightarrow \mathbb{R}$, its gradient at $x \in \mathbb{R}^{d}$ is denoted by $\nabla f(x)$, and if $f$ is twice-differentiable, we use $\nabla^{2} f(x)$ to denote its Hessian matrix at $x$. For any set $S$, $|S|$ represents the cardinality of $S$. In addition, $\otimes$ is the Kronecker product, $\langle\cdot, \cdot\rangle$ is the Euclidean inner product, and $\|\cdot\|$ is the $\ell_{2}$ norm. We use $\mathbf{0}_{d}, \mathbf{1}_{d}, \mathbf{O}_{d}, I_{d}$ to denote the $d$-dimensional all-zero vector, all-one vector, zero matrix, and identity matrix, respectively. 
Also, $\operatorname{diag}(A_1,\ldots,A_n)$ represents the block diagonal matrix whose diagonal blocks are sequentially $A_1,\ldots,A_n$. 
Given a matrix $A \in \mathbb{R}^{d \times d}$, we write $A\succeq \mathbf{O}_{d}$ if it is positive semidefinite and  $A\succ \mathbf{O}_{d}$ if it is positive definite. For any $A\succeq \mathbf{O}_{d}$ and $\mathbf{x} \in \mathbb{R}^{d},\|\mathbf{x}\|_{A}^{2}:=\mathbf{x}^{T} A \mathbf{x}$, $\lambda_{max}(A)$ and $\lambda_{min}(A)$ are the largest and smallest real eigenvalues of $A$, respectively, and $A^{\dagger}$ is $A$ 's pseudoinverse.
	
	\section{Problem Formulation}\label{sec:formulation}
	Consider a set $\mathcal{V}=\{1, \ldots, N\}$ of agents, where the agents are connected through the link set $\mathcal{E} \subseteq$ $\{\{i, j\} \subseteq \mathcal{V} \times \mathcal{V} \mid i \neq j\}$. We model such a network as a connected undirected graph $(\mathcal{V}, \mathcal{E})$, and denote the set of each agent $i$'s neighbors by $\mathcal{N}_{i}=\{j \in \mathcal{V} \mid\{i, j\} \in \mathcal{E}\}$. Suppose each agent $i$ observes a finite number of local samples $\xi_{i,j}\in\mathbb{R}^m$, $j=1,\ldots,\mathcal{C}_i$ that are independent random vectors, and attempts to solve the following optimization problem:
\vspace{-0.15cm} \begin{equation}
		\begin{array}{ll}
			\underset{x_1,\ldots,x_N\in \mathbb{R}^{d}}{\operatorname{minimize}} &\sum_{i \in \mathcal{V}}  f_{i}(x_i)\\
			\text {subject to } & x_1 = x_2 = \dots = x_N.
		\end{array}\label{formulation}
\vspace{-0.15cm} \end{equation}	
In problem~\eqref{formulation}, $f_{i}: \mathbb{R}^{d} \rightarrow \mathbb{R}$ is the local loss function of agent $i$, which is the average of every sample's loss $l_{i,j}(x_i,\xi_{i,j}): \mathbb{R}^{d} \rightarrow \mathbb{R}$ associated with agent $i$, i.e.,
\begin{align*}
f_i(x_i) = \frac{1}{\mathcal{C}_i} \sum_{j =1}^{\mathcal{C}_i} l_{i,j}(x_i, \xi_{i,j}).
\end{align*}
Below, we impose the assumptions on problem~\eqref{formulation}.
	\begin{assumption}\label{asm: 1}
		Problem~\eqref{formulation} satisfies the following:
		\begin{enumerate}
			\item[a)] There exists an optimal solution $x^{\star}\in \mathbb{R}^{d}$ to problem~\eqref{formulation}, and $\sum_{i \in \mathcal{V}} f_{i}(x_i)$ is globally restricted strongly convex with respect to $x^{\star}$ with convexity parameter $m_{\Bar{f}}>0$.
			\item[b)] For any given $\xi_{i,j}$, $l_{i,j}(\cdot, \xi_{i,j})$ is twice continuously differentiable and convex.
			\item[c)] There exists $M_i>0$ such that $l_{i,j}(x, \xi_{i,j})$ is $M_i$-smooth for all $\xi_{i,j}$.
		\end{enumerate}
	\end{assumption}
Assumption~\ref{asm: 1} leads to the following inequalities. For any given $x,y \in \mathbb{R}^{d}$ and $\xi_{i,j}$, $m_i \|x-y\|^2\leq\langle \nabla l_{i,j}(x,\xi_{i,j})-\nabla l_{i,j}(y,\xi_{i,j}), x-y \rangle\leq M_i \|x-y\|^2$ and 
    \begin{align}
    m_{i} I_{d} \preceq \nabla^{2} l_{i,j}(x,\xi_{i,j}) \preceq M_{i} I_{d}\label{hessian inequality}
    \end{align}
for some  $m_i \in [0,M_i]$. Also, the globally restricted strong convexity in Assumption \ref{asm: 1}a) guarantees that the optimal solution $x^{\star}$ is unique.

Problem~\eqref{formulation} requires that the agents reach a consensus while minimizing all the sample losses throughout the network. Indeed, a wide range of real-world problems can be cast into the form of problem~\eqref{formulation}, such as distributed model predictive control \cite{Giselsson2013AcceleratedGM}, distributed spectrum sensing \cite{Bazerque2010DistributedSS}, and logistic regression \cite{bach2014adaptivity}. Under many circumstances, these engineering problems involve huge datasets. Thus, we focus on solving problem~\eqref{formulation} in a fully decentralized and stochastic fashion. Specifically, we only allow each agent to communicate with its neighbors and use a randomly chosen subset of its local samples to compute.

	\section{Stochastic Second-order Proximal Method}\label{sec:algorithm}
	In this section, we develop a distributed stochastic algorithm for solving problem \eqref{formulation} over undirected networks. 
	
	To this end, we first provide a brief review of the distributed (deterministic) second-order proximal algorithm (SoPro) in \cite{wuxuyang_sopro}. Note that problem~\eqref{formulation} is equivalent to
		\vspace{-0.15cm} \begin{equation}
		\begin{array}{ll}
			\underset{\mathbf{x} \in \mathbb{R}^{N d}}{\operatorname{minimize}} & f(\mathbf{x}):=\sum_{i \in \mathcal{V}}  f_{i}(x_i) \\
			\text {subject to } & {W}^{\frac{1}{2}}\mathbf{x} = \mathbf{0}_{Nd},
		\end{array}\label{rereformulation}
	\vspace{-0.15cm} \end{equation}
where $\mathbf{x} = (x_1^{T},\ldots,x_N^{T})^{T}$, $W = P \otimes I_{d} \succeq \mathbf{O}_{N d}$, and
    \begin{align*}
    [P]_{i j}=\left\{\begin{array}{ll}
    \sum_{s \in \mathcal{N}_{i}} p_{i s}, & i=j, \\
    -p_{i j}, & j \in \mathcal{N}_{i}, \\
    0, & \text {otherwise,}
    \end{array} \quad \forall i, j \in \mathcal{V},\right.
    \end{align*}
with $p_{ij} = p_{ji}>0$ $\forall\{i,j\}\in\mathcal{E}$, so that the null space of $P$ is $\operatorname{span}\{\mathbf{1}_{N}\}$. Also, the unique optimal solution of problem \eqref{rereformulation} is $\mathbf{x}^{\star}=((x^{\star})^{T }, \ldots,(x^{\star})^{T })^{T }$. 

The application of the method of multipliers \cite{Boyd2011DistributedOA} to solve \eqref{rereformulation} gives the following: Starting from any $\mathbf{v}^{0}\in\mathbb{R}^{Nd}$,
	\begin{align}
	&\mathbf{x}^{k+1}=\arg \min _{\mathbf{x} \in \mathbb{R}^{N d}} L_{\beta}\left(\mathbf{x}, \mathbf{v}^{k}\right),\label{xup1}\displaybreak[0]\\
  &\mathbf{v}^{k+1}=\mathbf{v}^{k}+\beta W^{\frac{1}{2}} \mathbf{x}^{k+1}, \label{vup1}
	\end{align}
where $\mathbf{x}^{k}=((x_1^k)^T,\ldots,(x_N^k)^T)^T$ and $\mathbf{v}^{k}$ are the primal and dual variables, respectively, and $L_{\beta}(\mathbf{x}, \mathbf{v}): \mathbb{R}^{N d} \times \mathbb{R}^{N d} \rightarrow \mathbb{R}$ is an augmented Lagrangian function given by $L_{\beta}(\mathbf{x}, \mathbf{v})=f(\mathbf{x})+\mathbf{v}^{T} W^{\frac{1}{2}}\mathbf{x}+\frac{\beta}{2}\|W^{\frac{1}{2}}\mathbf{x}\|^{2}$, $\beta>0$. 

Since \eqref{xup1}--\eqref{vup1} cannot be implemented in a distributed way, the SoPro algorithm in \cite{wuxuyang_sopro} introduces a decentralized second-order proximal approximation of $L_{\beta}\left(\mathbf{x}, \mathbf{v}^{k}\right)$ in \eqref{xup1} and applies a variable change to \eqref{vup1}. Specifically, it replaces $L_{\beta}\left(\mathbf{x}, \mathbf{v}^{k}\right)$ with its second-order Taylor's expansion at $\mathbf{x}^k$. Then, it replaces the remaining coupling term $\frac{1}{2}(\mathbf{x}-\mathbf{x}^k)^T\rho W(\mathbf{x}-\mathbf{x}^k)$ in the primal update with $\frac{1}{2}(\mathbf{x}-\mathbf{x}^k)^TD(\mathbf{x}-\mathbf{x}^k)$, where $D=\operatorname{diag}(D_1, \ldots, D_N)$ is a symmetric block diagonal matrix satisfying $\nabla^2 f_i(x)+D_i \succ\mathbf{O}_d$ $\forall x \in \mathbb{R}^d$ $\forall i\in\mathcal{V}$. Furthermore,  we define $\mathbf{q}^k=((q_1^k)^T,\ldots,(q_N^k)^T)^T=W^{\frac{1}{2}}\mathbf{v}^k$ as a substitute for $\mathbf{v}^k$, and $\mathbf{q}^k$ can be ensured to identically stay in the range of $W^{\frac{1}{2}}$ by letting $\sum_{i\in\mathcal{V}}q_i^0=0$. To summarize, SoPro takes the following form: Starting from $\mathbf{q}^0$ satisfying $\sum_{i\in\mathcal{V}}q_i^0=0$,
\begin{align}
&\mathbf{x}^{k+1}=\mathbf{x}^k\!-\!(\nabla^2 f(\mathbf{x}^k)\!+\!D)^{-1}(\nabla f(\mathbf{x}^k)\!+\!\beta W\mathbf{x}^k\!+\!\mathbf{q}^k),\label{xup_sopro}\displaybreak[0]\\
&\mathbf{q}^{k+1}=\mathbf{q}^k+\beta W\mathbf{x}^{k+1},\quad\forall k\ge0,\notag
\end{align}
where $\nabla f(\mathbf{x}^{k})=(\nabla f_1(x_1^k)^{T },\ldots,\nabla f_N(x_N^k)^{T })^{T}$ and $\nabla^2 f(\mathbf{x}^{k})=\text{diag}(\nabla^2 f_1(x_1^k),\ldots,\nabla^2 f_N(x_N^k))$ satisfying $\nabla^2 f(\mathbf{x}^k)+D\succ\mathbf{O}_{Nd}$.

The primal update of SoPro \eqref{xup_sopro} requires that each agent uses up all its local samples. However, the agents may only be able to access or process a portion of their local samples at one time, especially in the big data scenario. Motivated by this, we consider approximating the gradient $ \nabla f(\mathbf{x}^{k}) $ and the Hessian $\nabla^2 f(\mathbf{x}^{k})$ in \eqref{xup_sopro} via a stochastic gradient $g(\mathbf{x}^{k})$ and a stochastic Hessian $h (\mathbf{x}^{k})$ given by
	\begin{align}
	  & g(\mathbf{x}^{k}) = ( g_1(x_1^k)^{T }, \ldots,  g_N(x_N^k)^{T })^{T},\notag \displaybreak[0] \\
		& \text{where each }g_{i}(x_{i}^{k} ) = \frac{1}{|\mathcal{G}_i^k|} \sum_{j \in \mathcal{G}_i^k} \nabla l_{i,j}(x_i^k, \xi_{i,j}),\label{eq:stgradient}\displaybreak[0] \\
	  & h(\mathbf{x}^{k}) = \operatorname{diag}(h_1(x_1^k),\ldots,h_N(x_N^k)),\notag \displaybreak[0]\\
		& \text{where each } h_i(x_{i}^{k}) = \frac{1}{|\mathcal{S}_i^k|}\sum_{j \in \mathcal{S}_i^k} \nabla^2 l_{i,j}(x_i^k, \xi_{i,j}).\label{eq:stHessian}
	\end{align}
Here, for each agent $i \in \mathcal{V}$, $\mathcal{G}_i^k$ and $\mathcal{S}_i^k$ are two independent random sample sets uniformly chosen from $\{1,\ldots,\mathcal{C}_i$\} without replacement, so that $g(\mathbf{x}^{k})$ and $h(\mathbf{x}^{k})$ are unbiased, i.e.,
\begin{align*}
\mathbf{E}_{\mathcal{G}_i^k}[g_{i}(\mathbf{x}^{k} )] = \nabla f_{i}(\mathbf{x}^{k} ),\quad\mathbf{E}_{\mathcal{S}_i^k}[h_{i}(\mathbf{x}^{k} )] = \nabla^2 f_{i}(\mathbf{x}^{k})
\end{align*}
for all $\mathbf{x}^{k}$. Due to \eqref{hessian inequality}, $m_i I_d \preceq  h_i(x_i^k) \preceq M_i I_d$ $\forall i\in\mathcal{V}$ and
\vspace{-0.15cm} \begin{equation}
\Lambda_m \preceq  h(\mathbf{x}^k)   \preceq \Lambda_M,\label{bound h}
\vspace{-0.15cm} \end{equation}   
where $\Lambda_{m}=\operatorname{diag}\left(m_{1}, \ldots, m_{N}\right) \otimes I_{d} \succeq \mathbf{O}_{N d}$ and $\Lambda_{M}=\operatorname{diag}\left(M_{1}, \ldots, M_{N}\right) \otimes I_{d} \succ \mathbf{O}_{N d}$.
                                           
Using the above randomly sampled gradient and Hessian, we obtain the following stochastic variant of SoPro: Starting from any $\mathbf{q}^0$ such that $\sum_{i\in\mathcal{V}}q_i^0=0$,
\begin{align}
&\mathbf{x}^{k+1}  =\mathbf{x}^{k}-(h(\mathbf{x}^{k})+D)^{-1}  (g(\mathbf{x}^{k})+\beta W\mathbf{x}^{k}+\mathbf{q}^{k}),
\label{xup_Ssopro1}\displaybreak[0]\\
&\mathbf{q}^{k+1}=\mathbf{q}^k+\beta W\mathbf{x}^{k+1},\quad\forall k\ge0,\notag
\end{align}
where each $D_i$ satisfies $h_i(x)+D_i \succ \mathbf{O}_d$ $\forall x \in \mathbb{R}^d$, i.e., $h(\mathbf{x})+D\succ\mathbf{O}_{Nd}$ $\forall\mathbf{x}\in\mathbb{R}^{Nd}$, so that \eqref{xup_Ssopro1} is well-posed. The above initialization and updates compose our proposed \emph{stochastic second-order proximal} (St-SoPro) method. The distributed implementation of St-SoPro over the undirected network $(\mathcal{V},\mathcal{E})$ is described in Algorithm~\ref{algorithm}, in which $y_i^k$ $\forall i\in\mathcal{V}$ are auxiliary variables for better presentation.

{
		\renewcommand{\baselinestretch}{1.05}
		\begin{algorithm}[ht] 
			\caption{St-SoPro} 
			\begin{algorithmic}[1]\label{algorithm}
			    \STATE \textbf{Initialization:}
			    \STATE Each agent $i \in \mathcal{V}$ sets $q_i^0 \in \mathbb{R}^d$ such that $\sum_{i \in \mathcal{V}} q_i^0 = \mathbf{0}_d$ (or simply sets $q_i^0=\mathbf{0}_d$).
			    \STATE Each agent $i \in \mathcal{V}$ arbitrarily sets $x_i^0 \in \mathbb{R}^d$, and sends $x_i^0$ to each neighbor $j \in \mathcal{N}_i$.
				\STATE After receiving $x _j^0$ $\forall j \in \mathcal{N}_i$, each agent $i \in \mathcal{V}$ sets $y_i^0 = \sum_{j\in \mathcal{N}_i} p_{ij}(x_i^0 - x_j^0)$.
				\FOR{$k = 0,1,2,\ldots $} 
				\STATE Each agent $i\in \mathcal{V}$ randomly and uniformly chooses two independent subsets $\mathcal{G}_i^k $ and $\mathcal{S}_i^k $ of $\{1,\ldots,\mathcal{C}_i\}$, and then computes $g_i(x_i^k)$ and $h_i(x_i^k)$ according to \eqref{eq:stgradient} and \eqref{eq:stHessian}.
			  \STATE Each agent $i\in\mathcal{V}$ computes $x_i^{k+1} = x_i^{k} - (h_i(x_i^k)+D_i)^{-1}(g_i(x_i^k) + \beta y_i^k + q_i^k)$, and then sends $x_i^{k+1}$ to each neighbor $j\in \mathcal{N}_i$.
				\STATE After receiving $x_j^{k+1}$ $\forall j\in \mathcal{N}_i$, each agent $i\in \mathcal{V}$ computes  $y_i^{k+1} = \sum_{j\in \mathcal{N}_i} p_{ij}(x_i^{k+1} - x_j^{k+1})$ and $q_i^{k+1}=q_{i}^{k} + \beta y_i^{k+1}$.
				\ENDFOR
			\end{algorithmic}
		\end{algorithm}
	}
	
	\section{Convergence Analysis}\label{sec: convergence}
	This section provides the convergence analysis of St-SoPro. 
	
	We first impose an assumption on the expected deviation of each sample loss $l_{i,j}(x_i,\xi_{i,j})$ from each entire local loss $f_i(x_i)$ in terms of their gradients.
	\begin{assumption}\label{asm: 2} The random vectors $\xi_{i,j}$ $\forall i\in \mathcal{V}$ $\forall j=1,\ldots,\mathcal{C}_i$ are independent and there is some $\sigma>0$ such that
	\begin{equation*}
	\mathbf{E}_{\xi_{i,j}}\left[\|\nabla l_{i,j}(x_i, \xi_{i,j})-\nabla f_{i}(x_i)\|^{2}\right]\leq \sigma^{2},\quad\forall x_i\in\mathbb{R}^d.
	\end{equation*}
	\end{assumption}	
To simplify the notation, below we let $\mathcal{C}_i=\mathcal{C}$ $\forall i\in\mathcal{V}$ and $\mathcal{G}_i^k$ $\forall i\in\mathcal{V}$ $\forall k\ge0$ be of the same size $\mathcal{G}$. We also abbreviate $\mathbf{E}_{\mathcal{G}_i^k}[\cdot]$ and $\mathbf{E}_{\mathcal{S}_i^k}[\cdot]$ to $\mathbf{E}[\cdot]$. According to \cite[Chapter 2]{LohrSampling}, 
	\begin{align}
	&\mathbf{E} \left[ \|g(\mathbf{x}^{k})-\nabla f(\mathbf{x}^{k})\|^{2}\right]=\sum_{i \in \mathcal{V}} \mathbf{E} \left[ \|g_i(x_i^{k})-\nabla f_i(x_i^{k}) \|^{2}  \right] \notag\displaybreak[0]\\
	&\qquad\qquad\qquad\qquad\qquad\leq N \tau \sigma^2,\text{ where } \tau \coloneqq \frac{\mathcal{C}-\mathcal{G}}{\mathcal{C}\mathcal{G}}.\label{tau sigma^2}
	\end{align}
This is consistent with the fact that when computing the stochastic gradient $g(\mathbf{x}^{k})$, if we reduce the number $\mathcal{G}$ of randomly selected samples, then the discrepancy between $g(\mathbf{x}^{k})$ and $\nabla f(\mathbf{x}^{k})$ becomes larger in expectation.
	
Next, for the sake of presenting the convergence result, we introduce the following notation and definitions. According to \cite{wuxuyang_sopro}, any $\mathbf{v}\in \mathbb{R}^{Nd} $ satisfying $\nabla f\left(\mathbf{x}^{\star}\right)=-W^{\frac{1}{2}} \mathbf{v}$ is a dual optimum of problem~\eqref{rereformulation}. Thus, we define
	\vspace{-0.15cm} \begin{equation}
	\mathbf{v}^{\star}=-\left(W^{\dagger}\right)^{\frac{1}{2}} \nabla f\left(\mathbf{x}^{\star}\right)\label{vstar1}
    \vspace{-0.15cm} \end{equation} 
as a particular dual optimum of \eqref{rereformulation}. Also, throughout this section, we let $\mathbf{v}^{k}=(W^{\dagger})^{\frac{1}{2}} \mathbf{q}^{k}$, $\mathbf{z}^{k}=((\mathbf{x}^{k})^{T},(\mathbf{v}^{k})^{T})^{T}$, and $\mathbf{z}^{\star}=((\mathbf{x}^{\star})^{T},(\mathbf{v}^{\star})^{T})^{T}$. Since such $\mathbf{v}^{k}$ satisfies \eqref{vup1},
    \vspace{-0.15cm} \begin{equation}
    \mathbf{v}^{k}, \mathbf{v}^{\star}, \mathbf{v}^{k}-\mathbf{v}^{\star} \in \{\mathbf{x}\in\mathbb{R}^{Nd}|x_1+\cdots+x_N=\mathbf{0}_d\}\label{v in s^o}.
    \vspace{-0.15cm} \end{equation} 
In addition, we define $f_{\beta}(\mathbf{x})=f(\mathbf{x}) + \frac{\beta}{4} \|\mathbf{x}\|_{W}^2 $. Based on \cite[Lemma 1]{wuxuyang_sopro}, for any $\mathbf{x} \in \mathbb{R}^{Nd}$,
	\vspace{-0.15cm} \begin{equation}
   \left\langle\nabla f_{\beta}(\mathbf{x})-\nabla f_{\beta}\left(\mathbf{x}^{\star}\right), \mathbf{x}-\mathbf{x}^{\star}\right\rangle \geq \zeta(\gamma) \left\|\mathbf{x}-\mathbf{x}^{\star}\right\|^{2},\label{restricted stongly}
    \vspace{-0.15cm} \end{equation}
where $\zeta:(0,\infty)\rightarrow\mathbb{R}$ is given by $\zeta(\gamma)  = \min \{ \frac{m_{\Bar{f}}}{N} - 2M\gamma, \beta \frac{\lambda_W}{2(1+1/\gamma^2)}\}$, $m_{\Bar{f}}$ is given in Assumption~\ref{asm: 1}a), $M=\max_{i\in\mathcal{V}}M_i>0$, and $\lambda_W$ is the smallest non-zero eigenvalue of $W$. It can be shown that $\zeta(\gamma)>0$ if and only if $\gamma\in (0,m_{\Bar{f}}/(2MN))$, and its maximum value is attained at the unique positive root of the cubic equation $4 M N\gamma^3 + (\beta N \lambda_W - 2m_{\Bar{f}})\gamma^2 + 4MN \gamma - 2m_{\Bar{f}} = 0$. We denote the maximum value of $\zeta(\gamma)$ by $m_{\beta}$, which is the convexity parameter of $f_{\beta}$ (and indeed can be taken as any positive $\zeta(\gamma)$). 

Our convergence analysis relies on the following parameter condition. Suppose there exists $\eta_s \in(0,1)$ such that
\vspace{-0.15cm} \begin{equation}
D \succ \frac{\Lambda_{M}}{2(1-\eta_s)}+\frac{\left(\Lambda_{M}-\Lambda_{m}\right)^{2}}{8 \eta_s m_{\beta} } +\frac{\Lambda_{M}-3\Lambda_{m}}{2} + \beta(\frac{{I}_{Nd}}{2}+ W).\label{D}
\vspace{-0.15cm} \end{equation}
Let $R=\frac{\Lambda_m + \Lambda_M}{2}+D$ and $Q=\operatorname{diag}(\beta R,I_{N d})$, which are guaranteed to be positive definite due to \eqref{D}. Furthermore, it follows from \eqref{bound h} and \eqref{D} that the condition $h(\mathbf{x})+D \succ \mathbf{O}_{Nd}$ required in Section~\ref{sec:algorithm} holds. 

We provide our main result in the theorem below.
    
    \begin{theorem} \label{theo: A-1}
	 Suppose Assumptions \ref{asm: 1} and \ref{asm: 2} hold. If \eqref{D} holds for some $\eta_s \in(0,1)$, then $\mathbf{z}^{k}$ converges linearly to a neighborhood of $\mathbf{z}^{\star}$ in expectation, i.e., there exist $\delta_s \in(0,1)$ and $\Gamma>0$ such that for each $k \geq 0$,
	\vspace{-0.15cm} \begin{equation}
		\!\!\!\mathbf{E}\left[\|\mathbf{z}^{k+1}\!-\!\mathbf{z}^{\star}\|_{Q}^{2}\right]\!\leq\!(1\!-\!\delta_s)\mathbf{E}\left[\|\mathbf{z}^{k}\!-\!\mathbf{z}^{\star}\|_{Q}^{2}\right]\!+\!\Gamma N \tau \sigma^2,\label{rate}
	\vspace{-0.15cm} \end{equation}
	\vspace{-0.15cm} \begin{equation}
        \operatorname{lim\;sup}_{k \rightarrow \infty}\mathbf{E}\left[\|\mathbf{z}^{k}-\mathbf{z}^{\star}\|_{Q}^{2} \right]\leq \frac{1}{\delta_s}\Gamma N \tau \sigma^2.\label{neighborhood}
    \vspace{-0.15cm} \end{equation}
    In particular, given any $c_{1}>0$, $\Gamma = \frac{2 \left(1+ c_{1}\right) \delta_s }{\lambda_{W}}+2$ and 
    \begin{align}
	    \delta_s=& \sup_{c_{2}>0} \min \left\{\tfrac{\beta \lambda_{W} \kappa_{c_0, \eta_s}}{2\left(1+c_{1}\right)\left\|\Lambda_{M}+D\right\|^{2}},\tfrac{1-\eta_s}{\left(1+1 / c_{1}\right)\left(1+c_{2}\right)},\right.\nonumber\displaybreak[0]\\
	    &\left.\tfrac{2 \eta_s m_{\beta}-c_0}{\lambda_{\max }\left(R+\left(1+1 / c_{1}\right)\left(1+1 / c_{2}\right) \Lambda_{M}^{2} /\left(\beta \lambda_{W}\right)\right)}\right\},\label{delta_s}
    \end{align}
where $c_0 \in (0, 2\eta_s m_{\beta})$ is such that $\kappa_{c_0,\eta_s}\coloneqq\lambda_{min}(R-\frac{\Lambda_{M}}{2(1-\eta_s)}-\frac{(\Lambda_{M}-\Lambda_{m})^{2}}{4 c_0}+\Lambda_{m}-\Lambda_{M}-\beta(\frac{I_{N d}}{2}+W))>0$ (which always exists).
    \end{theorem}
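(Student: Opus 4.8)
The plan is to set up a Lyapunov-type argument on the composite error $\|\mathbf{z}^{k}-\mathbf{z}^{\star}\|_{Q}^{2} = \beta\|\mathbf{x}^{k}-\mathbf{x}^{\star}\|_{R}^{2} + \|\mathbf{v}^{k}-\mathbf{v}^{\star}\|^{2}$, mirroring the deterministic analysis of SoPro in \cite{wuxuyang_sopro} but carrying along the stochastic gradient/Hessian errors. First I would rewrite the primal update \eqref{xup_Ssopro1} in terms of the exact quantities plus perturbations: introduce $e_g^k := g(\mathbf{x}^k) - \nabla f(\mathbf{x}^k)$ and $e_h^k := h(\mathbf{x}^k) - \nabla^2 f(\mathbf{x}^k)$, so that $(h(\mathbf{x}^k)+D)(\mathbf{x}^{k+1}-\mathbf{x}^k) = -(\nabla f(\mathbf{x}^k) + \beta W\mathbf{x}^k + \mathbf{q}^k) - e_g^k$. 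Using $\mathbf{v}^{k+1}=\mathbf{v}^k+\beta W^{1/2}\mathbf{x}^{k+1}$ and the optimality conditions $\nabla f(\mathbf{x}^\star) = -W^{1/2}\mathbf{v}^\star$, $W^{1/2}\mathbf{x}^\star=\mathbf{0}$, I would derive an exact identity relating $\mathbf{z}^{k+1}-\mathbf{z}^\star$ to $\mathbf{z}^k-\mathbf{z}^\star$ through a linear operator plus a term involving $(h(\mathbf{x}^k)+D)(\mathbf{x}^{k+1}-\mathbf{x}^k)$ and $e_g^k$.

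Next I would take the $Q$-weighted squared norm of $\mathbf{z}^{k+1}-\mathbf{z}^\star$ and expand. The cross terms between the "deterministic descent" part and $\mathbf{x}^{k+1}-\mathbf{x}^\star$ are where the restricted strong convexity \eqref{restricted stongly} of $f_\beta$ and the definition of $\zeta(\gamma)$ / $m_\beta$ enter — these produce the contraction. The key algebraic device is a sequence of Young's inequalities with free parameters: $c_1$ splits the dual error contribution, $c_2$ further splits terms coming from $\Lambda_M$ versus $R$, and $c_0\in(0,2\eta_s m_\beta)$ is the "budget" extracted from the strong convexity constant $2\eta_s m_\beta$ so that the residual matrix $\kappa_{c_0,\eta_s}I$ stays positive definite (this is exactly where condition \eqref{D} is invoked: \eqref{D} is precisely the statement that with the slack $\frac{(\Lambda_M-\Lambda_m)^2}{4c_0}$ in place of $\frac{(\Lambda_M-\Lambda_m)^2}{8\eta_s m_\beta}$, the matrix $R - \frac{\Lambda_M}{2(1-\eta_s)} - \frac{(\Lambda_M-\Lambda_m)^2}{4c_0} + \Lambda_m - \Lambda_M - \beta(\tfrac{I}{2}+W)$ is still positive definite for $c_0$ near $2\eta_s m_\beta$). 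Collecting terms, the deterministic part contracts by the factor $1-\delta_s$ with $\delta_s$ given by the three-way minimum in \eqref{delta_s} — the first slot controlling the primal block via $\beta\lambda_W\kappa_{c_0,\eta_s}/\|\Lambda_M+D\|^2$, the second the "consensus/$W^{1/2}$" coupling, the third the residual strong-convexity slot.

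Then I would handle the stochastic remainder. Conditioning on the filtration generated by all randomness up to step $k$, $\mathbf{E}[e_g^k]=\mathbf{0}$ kills the linear-in-$e_g^k$ cross terms, and $\mathbf{E}[\|e_g^k\|^2]\le N\tau\sigma^2$ from \eqref{tau sigma^2} bounds the quadratic term; terms involving $e_h^k$ are absorbed because $h(\mathbf{x}^k)+D$ already lies in the positive-definite range guaranteed by \eqref{bound h}–\eqref{D}, so the inverse is uniformly bounded and the Hessian error only contributes through the same $\|\Lambda_M+D\|$-type constants already accounted for — this is the reason only $\tau\sigma^2$ (not a separate Hessian-variance quantity) appears on the right-hand side. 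Taking total expectation and iterating gives \eqref{rate} with $\Gamma = \frac{2(1+c_1)\delta_s}{\lambda_W}+2$; \eqref{neighborhood} follows by summing the geometric series. Finally, taking the supremum over $c_2>0$ (and implicitly choosing $c_0$, $c_1$ to enlarge the guaranteed rate) yields the stated $\delta_s$. The main obstacle I anticipate is the bookkeeping in the cross-term expansion: getting the coupling between the primal residual $\mathbf{x}^{k+1}-\mathbf{x}^\star$, the "one-step displacement" $\mathbf{x}^{k+1}-\mathbf{x}^k$, and the dual residual $\mathbf{v}^{k+1}-\mathbf{v}^\star$ to collapse into a single $-\delta_s\|\mathbf{z}^k-\mathbf{z}^\star\|_Q^2$ term requires choosing the Young's-inequality parameters so that every leftover matrix is simultaneously dominated — in particular verifying that \eqref{D} is exactly the feasibility condition making $\kappa_{c_0,\eta_s}>0$ achievable, and checking that $h(\mathbf{x}^k)+D\preceq\Lambda_M+D$ is tight enough to control the operator norm of $(h(\mathbf{x}^k)+D)^{-1}$ appearing after the expansion.
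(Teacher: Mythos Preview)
Your proposal follows essentially the same route as the paper: a Lyapunov argument on $\|\mathbf{z}^k-\mathbf{z}^\star\|_Q^2$ combining the restricted strong convexity \eqref{restricted stongly} of $f_\beta$ with a cascade of Young/AM--GM inequalities parameterized by $c_0,c_1,c_2,\eta_s$, and invoking \eqref{D} precisely as the feasibility condition making $\kappa_{c_0,\eta_s}>0$ for some $c_0<2\eta_s m_\beta$. The paper organizes the algebra slightly differently---it first proves a one-step \emph{decrement} lower bound (Lemma in the appendix) and then \emph{separately} bounds $\|\mathbf{v}^k-\mathbf{v}^\star\|^2$ from above in terms of $\|\mathbf{x}^{k+1}-\mathbf{x}^k\|$, $\|\mathbf{x}^k\|_W$, $\|\mathbf{x}^k-\mathbf{x}^\star\|$, and $N\tau\sigma^2$, then subtracts $\delta_s$ times the latter from the former---but this is the same content repackaged.

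One correction is needed. Your claim that conditioning on $\mathcal{F}_k$ and $\mathbf{E}[e_g^k]=\mathbf{0}$ ``kills the linear-in-$e_g^k$ cross terms'' is only half right. Cross terms of the form $\langle\mathbf{x}^k-\mathbf{x}^\star,\,e_g^k\rangle$ do vanish (the paper uses exactly this), but cross terms of the form $\langle\mathbf{x}^{k+1}-\mathbf{x}^k,\,e_g^k\rangle$ do \emph{not}, because $\mathbf{x}^{k+1}$ depends on $g(\mathbf{x}^k)$ and $h(\mathbf{x}^k)$ through the update \eqref{xup_Ssopro1}. The paper handles these surviving terms by a further AM--GM split (e.g.\ $\langle\mathbf{x}^{k+1}-\mathbf{x}^k,e_g^k\rangle\ge -\tfrac{\beta}{4}\|\mathbf{x}^{k+1}-\mathbf{x}^k\|^2-\tfrac{1}{\beta}\|e_g^k\|^2$), and this is where the two pieces of $\Gamma N\tau\sigma^2$ originate---one from the decrement lemma, one from the dual upper bound. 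Relatedly, your final worry about controlling $\|(h(\mathbf{x}^k)+D)^{-1}\|$ is misplaced: the paper never needs the inverse's norm. After multiplying \eqref{xup_Ssopro1} through by $\tilde H^k:=h(\mathbf{x}^k)+D$ it works directly with $\tilde H^k(\mathbf{x}^{k+1}-\mathbf{x}^k)$ and uses only the two-sided bound \eqref{bound h}, so that $\tilde H^k\preceq\Lambda_M+D$ and $\tilde H^k-R$ lies between $\pm(\Lambda_M-\Lambda_m)/2$; this is the source of the $\|\Lambda_M+D\|^2$ and $(\Lambda_M-\Lambda_m)^2$ factors in \eqref{delta_s}.
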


\begin{proof}
See Appendix~\ref{ssec:proof}.
\end{proof}

It can be shown that larger $\min_{i\in\mathcal{V}}m_i$, smaller $\max_{i\in\mathcal{V}}M_i$, or smaller $\lambda_W$ leads to faster convergence (i.e., larger $\delta_s$) of St-SoPro. Such analysis follows the idea in \cite{wuxuyang_sopro} and is thus omitted here due to space limitation. In addition, it can be seen from \eqref{neighborhood} that the error bound of $\operatorname{lim\;sup}_{k \rightarrow \infty}\mathbf{E}\left[\|\mathbf{z}^{k}-\mathbf{z}^{\star}\|_{Q}^{2} \right]$ drops with the decrease of $\tau$. Hence, essentially, larger random sample sets for computing the stochastic gradients lead to smaller optimality error.

In fact, the expected distance between $\mathbf{x}^k $ and $\mathbf{x}^{\star}$ can eventually reach an arbitrarily small value under proper parameter settings. To see this, for simplicity, let $m_i=m>0$ and $M_i=M\ge m$ for all $i\in\mathcal{V}$, and pick any $\eta_s\in(0,1)$, $c_0\in(0,2\eta_s m_{\beta})$, and $c_1>0$. We choose, for example, $D=\alpha I_{N d}$ with $\alpha=(\frac{1}{2}+\lambda_{max}(W))\beta + \mu$, $\beta>0$, and $\mu>\tfrac{M-3m}{2} + \tfrac{{M}}{2(1-\eta_s)}+\tfrac{({M}-{m})^{2}}{4 c_0}$, so that \eqref{D} holds. This also results in $\kappa_{c_0,\eta_s}=\mu-\frac{M-3m}{2}-\frac{{M}}{2(1-\eta_s)}-\frac{({M}-{m})^{2}}{4 c_0}>0$. From \eqref{neighborhood}, we obtain $\operatorname{lim\;sup}_{k \rightarrow \infty}\mathbf{E}[\|\mathbf{x}^{k}-\mathbf{x}^{\star}\|^{2} ]\leq \frac{\Gamma N \tau \sigma^2}{\delta_s \beta [(m+M)/2 + \beta(1/2 + \lambda_{max}(W)) + \mu]}$. It can thus be shown that as $\beta\rightarrow\infty$, such an upper bound on $\operatorname{lim\;sup}_{k \rightarrow \infty}\mathbf{E}\left[\|\mathbf{x}^{k}-\mathbf{x}^{\star}\|^{2} \right]$ goes to zero. Since this bound is continuous at $\beta$, given any $\epsilon>0$, the above parameter setting with a sufficiently large $\beta$ guarantees $\operatorname{lim\;sup}_{k \rightarrow \infty}\mathbf{E}\left[\|\mathbf{x}^{k}-\mathbf{x}^{\star}\|^{2} \right]<\epsilon$.

\section{Numerical Experiment}
This section compares the practical convergence performance of St-SoPro with several state-of-the-art distributed stochastic optimization algorithms.

In the numerical experiment, we intend to learn linear classifiers by solving $l_2$-regularized logistic regression of the following form over a randomly generated, undirected, and connected network:
  \vspace{-0.15cm} \begin{equation} \label{experiment}
	    \underset{x\in \mathbb{R}^d}{\min} \sum_{i\in \mathcal{V}} \frac{1}{\mathcal{C}_i} \sum_{j=1}^{\mathcal{C}_i} \left( \frac{\lambda}{2}\|x\|^2 +\ln(1+e^{-(a_{i,j}^{\mathrm{T}}x) b_{i,j}})\right), 
	\vspace{-0.15cm} \end{equation}
where $\lambda > 0$ is the regularization parameter and $\{a_{i,j}, b_{i,j}\}$ are the data samples. Our experiment is conducted on two standard real datasets \emph{a4a} and \emph{mushrooms} from the LIBSVM library \cite{chang2011libsvm}.
Table~\ref{tab:parameters} lists the values of the problem and network parameters corresponding to these two datasets, including the problem dimension $d$, the number $N$ of agents, the network's average degree $d_a=\sum_{i\in \mathcal{V}}\frac{|\mathcal{N}_i|}{N}$, the total number $\mathcal{C}_i$ of samples that we assign to each agent $i$, the sizes $|\mathcal{G}_i^k|$ and $|\mathcal{S}_i^k|$ of the random sample sets that each agent $i$ chooses per iteration, as well as the regularization parameter $\lambda$.	 

\begin{table}[ht]
    \centering
    \caption{Parameter values in the numerical experiment.}
    \begin{tabular}{c|c c c c c c c }
    \hline
     & d & $N$ & $d_a$& $\mathcal{C}_i$ & $|\mathcal{G}_i^k|$& $|\mathcal{S}_i^k|$ & $ \lambda$\\\hline
     \emph{a4a} & 123 & 20 & 5 & 239 & 80 & 10 & $ 10^{-2} $\\
     \emph{mushrooms} & 112 & 10 & 3 & 600 & 80 & 25 & $ 10^{-2} $\\
     \hline     
    \end{tabular}
    \label{tab:parameters}
\end{table}

The simulations include DSGD \cite{pu2021sharp}, EDAS \cite{huang2022improving}, DSGT \cite{pu2021distributed}, and DPD-SGD-T \cite{yi2022primal}, which are all first-order methods, for comparison with our proposed St-SoPro. We fine-tune all the algorithm parameters so that the algorithms reach a given accuracy ($2\times 10^{-1}$ for \emph{a4a} and $10^{-1}$ for \emph{mushrooms}) within fewest possible iterations.

Figures~\ref{fig:a4a}(a)--(c) and~\ref{fig:mushrooms}(a)--(c) plot the evolutions of the optimality error $\frac{1}{N}\sum_{i \in \mathcal{V}}\|{x}_i^k - {x}^{\star}\|^2$ generated by the aforementioned algorithms over \emph{a4a} and \emph{mushrooms} with respect to the number of iterations, the number of communication bits (set as $32$ times the number of transmitted real scalars according to \cite{Alistarh2017QSGDCS}), and computation time. Observe that St-SoPro converges faster than the other algorithms to reach the given accuracy, validating its computational and communication efficiency. It is worth mentioning that although St-SoPro is a second-order method, its computational cost can be comparable with the first-order methods when addressing such common machine learning problems. Figures~\ref{fig:a4a}(d) and~\ref{fig:mushrooms}(d) present the correct rates of classification on the test sets upon completing each iteration of these algorithms as training, whereby St-SoPro outperforms the others in the training effect.

\begin{figure*}[ht]
	\setlength{\abovecaptionskip}{-5pt}
	\setlength{\belowcaptionskip}{-10pt}
	\centering
	\begin{minipage}{0.24\linewidth}
	\begin{subfigure}[]{
	    \centering
		\includegraphics[width=1.67in,height=1.15in]{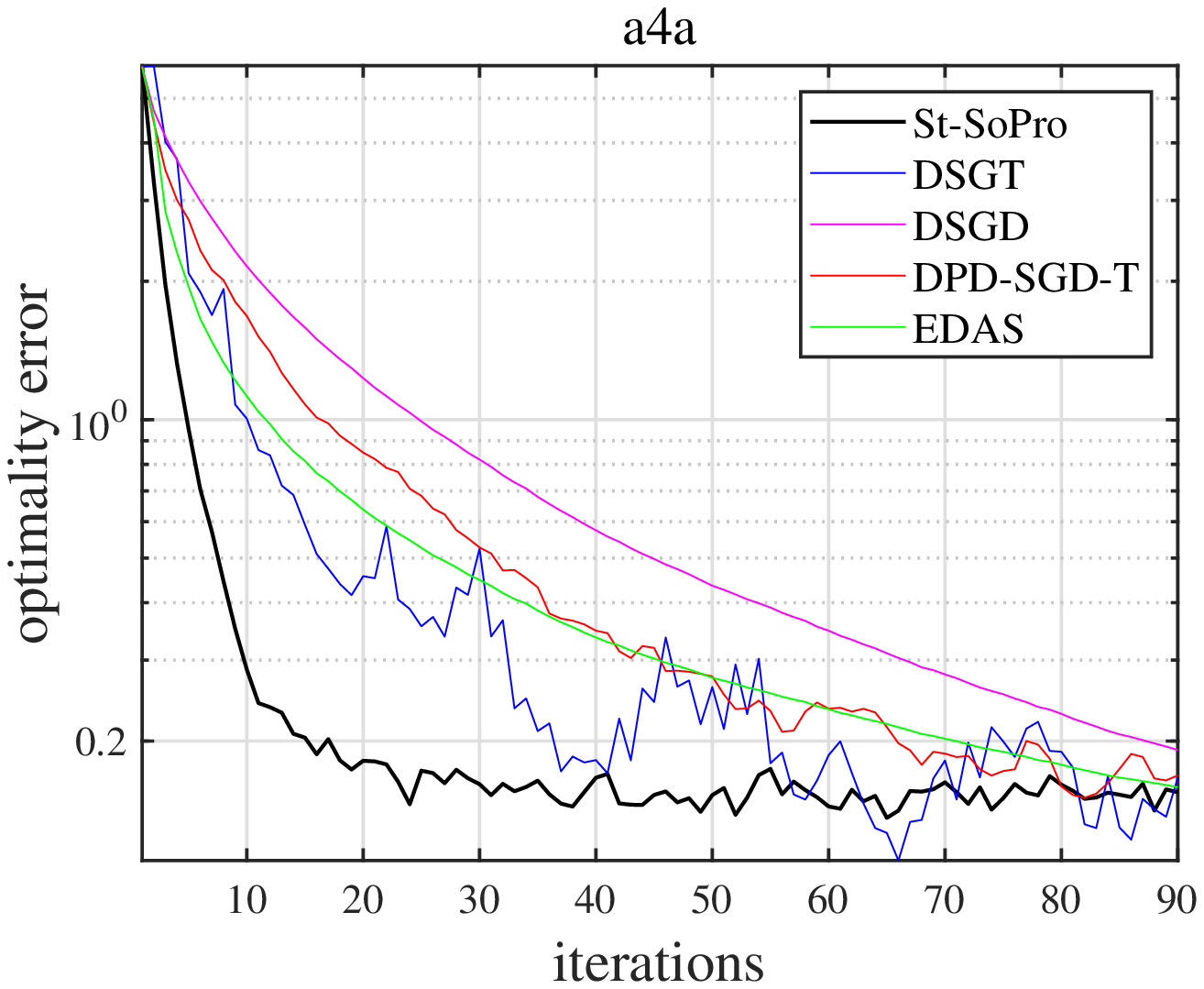}
		\label{a4a_iter}}
	\end{subfigure}
	\end{minipage}%
	\begin{minipage}{0.24\linewidth}
	\begin{subfigure}[]{
		\centering
		\includegraphics[width=1.67in,height=1.15in]{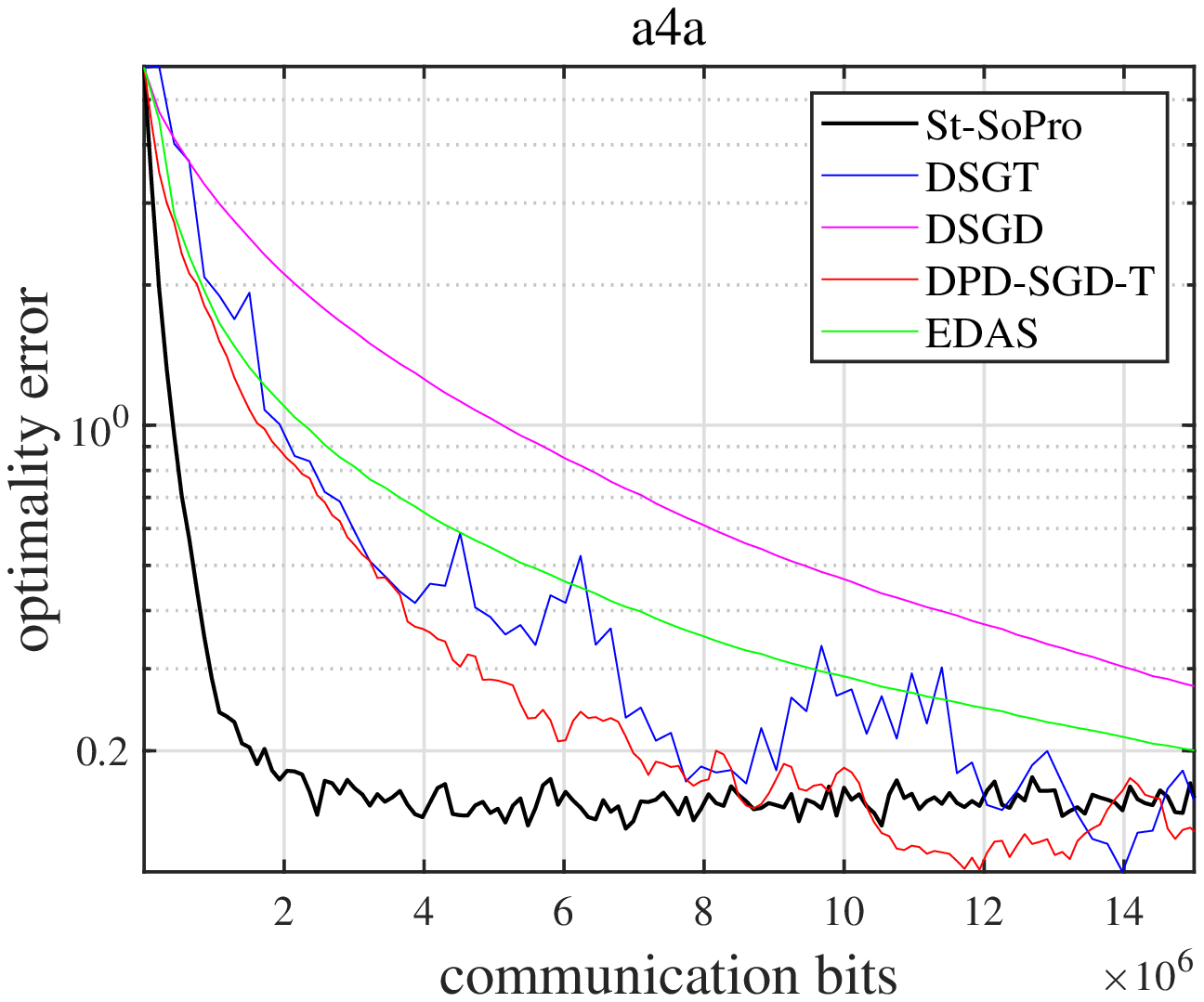}
		\label{a4a_comm}
		}\end{subfigure}
	\end{minipage}
	\begin{minipage}{0.24\linewidth}
	\begin{subfigure}[]{
		\centering
		\includegraphics[width=1.67in,height=1.15in]{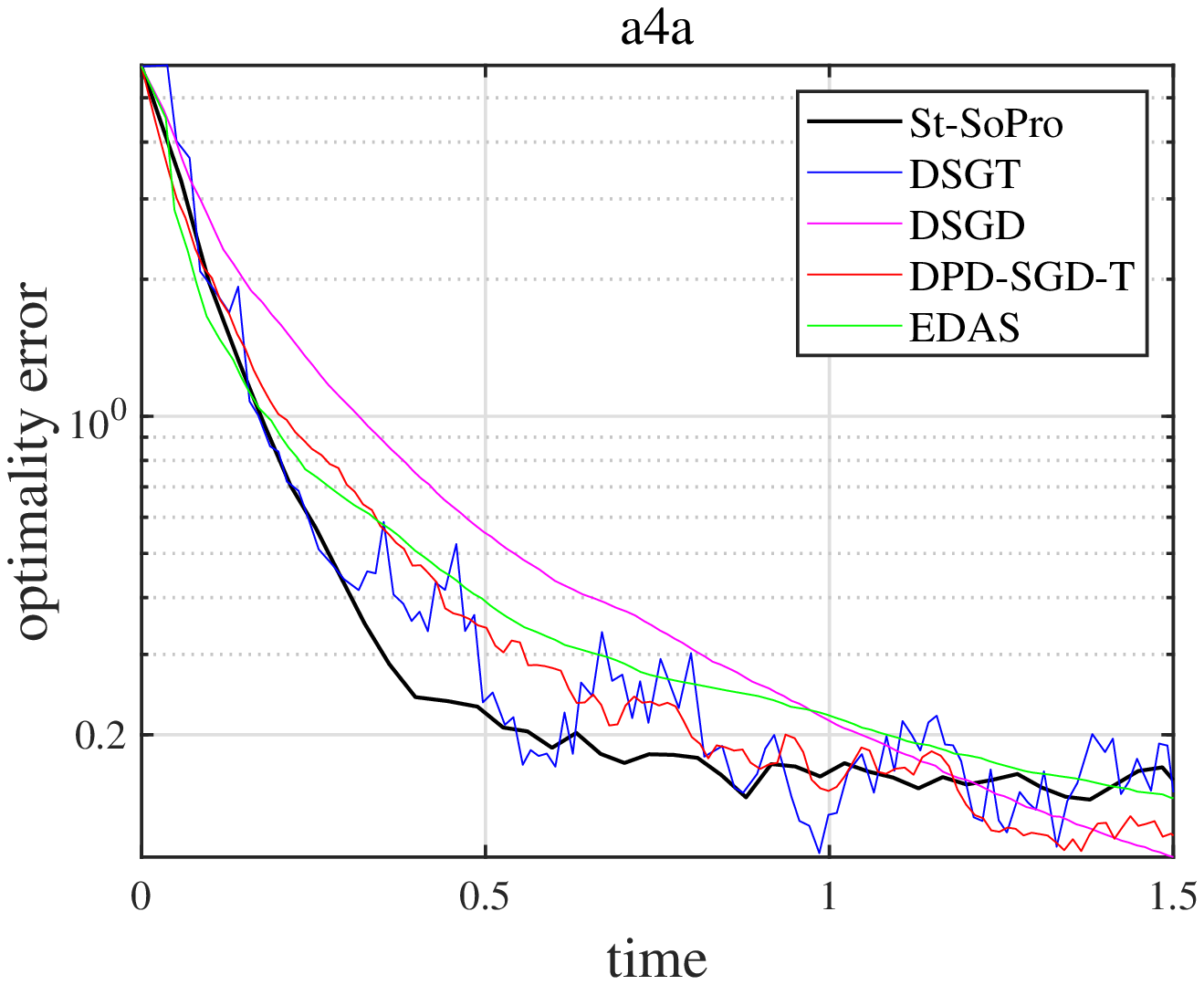}
		\label{a4a_time}
		}\end{subfigure}
	\end{minipage}
	\begin{minipage}{0.24\linewidth}
	\begin{subfigure}[]{
		\centering
		\includegraphics[width=1.67in,height=1.15in]{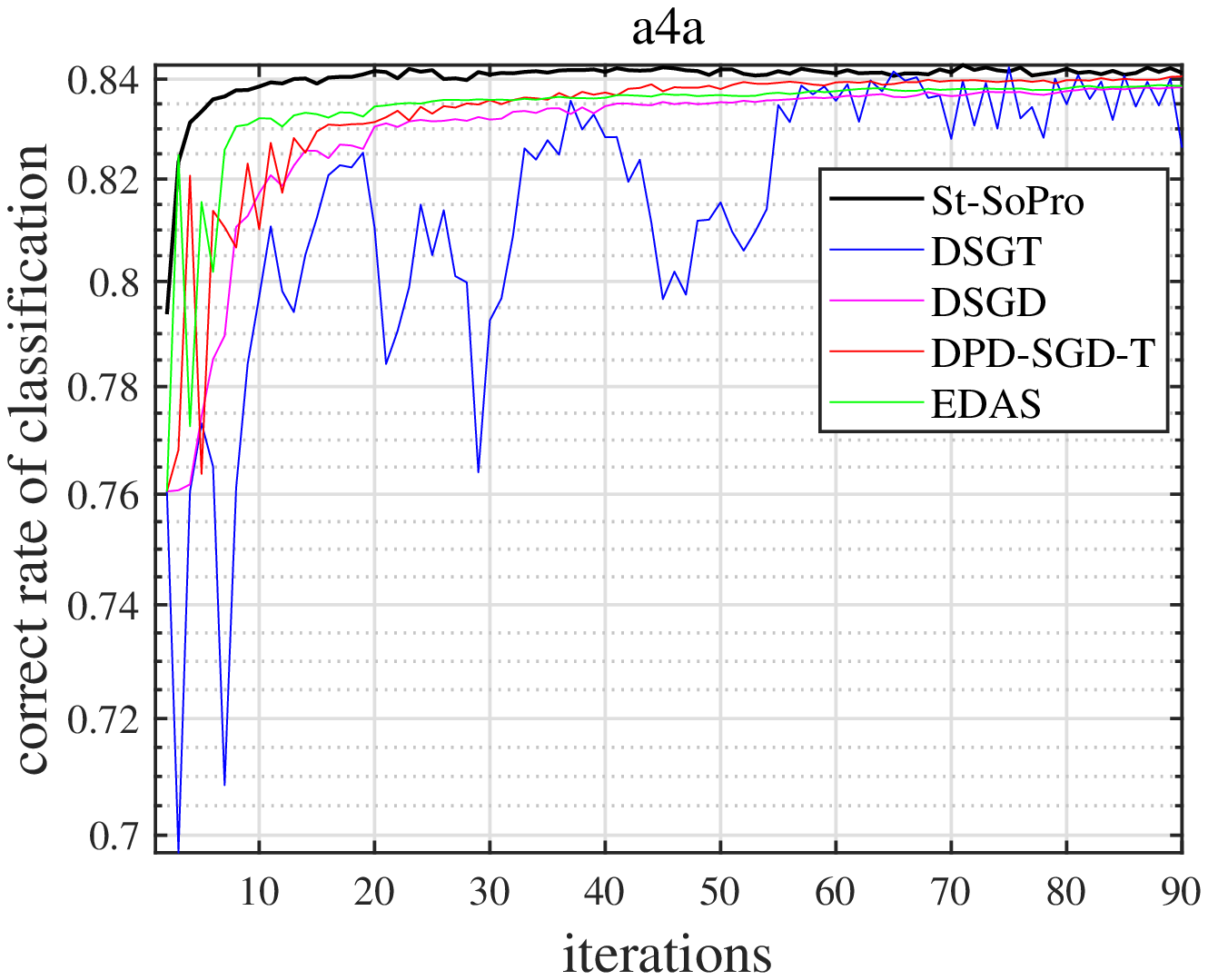}
		\label{a4a_acc}
		}\end{subfigure}
	\end{minipage}
	\caption{Convergence performance of St-SoPro, DSGT, DSGD, DPD-SGD-T, and EDAS on dataset \emph{a4a}.\label{fig:a4a}}
\end{figure*}

\begin{figure*}[ht]
	\setlength{\abovecaptionskip}{-5pt}
	\setlength{\belowcaptionskip}{-10pt}
	\centering
	\begin{minipage}{0.24\linewidth}
	\begin{subfigure}[]{
	    \centering
		\includegraphics[width=1.67in,height=1.15in]{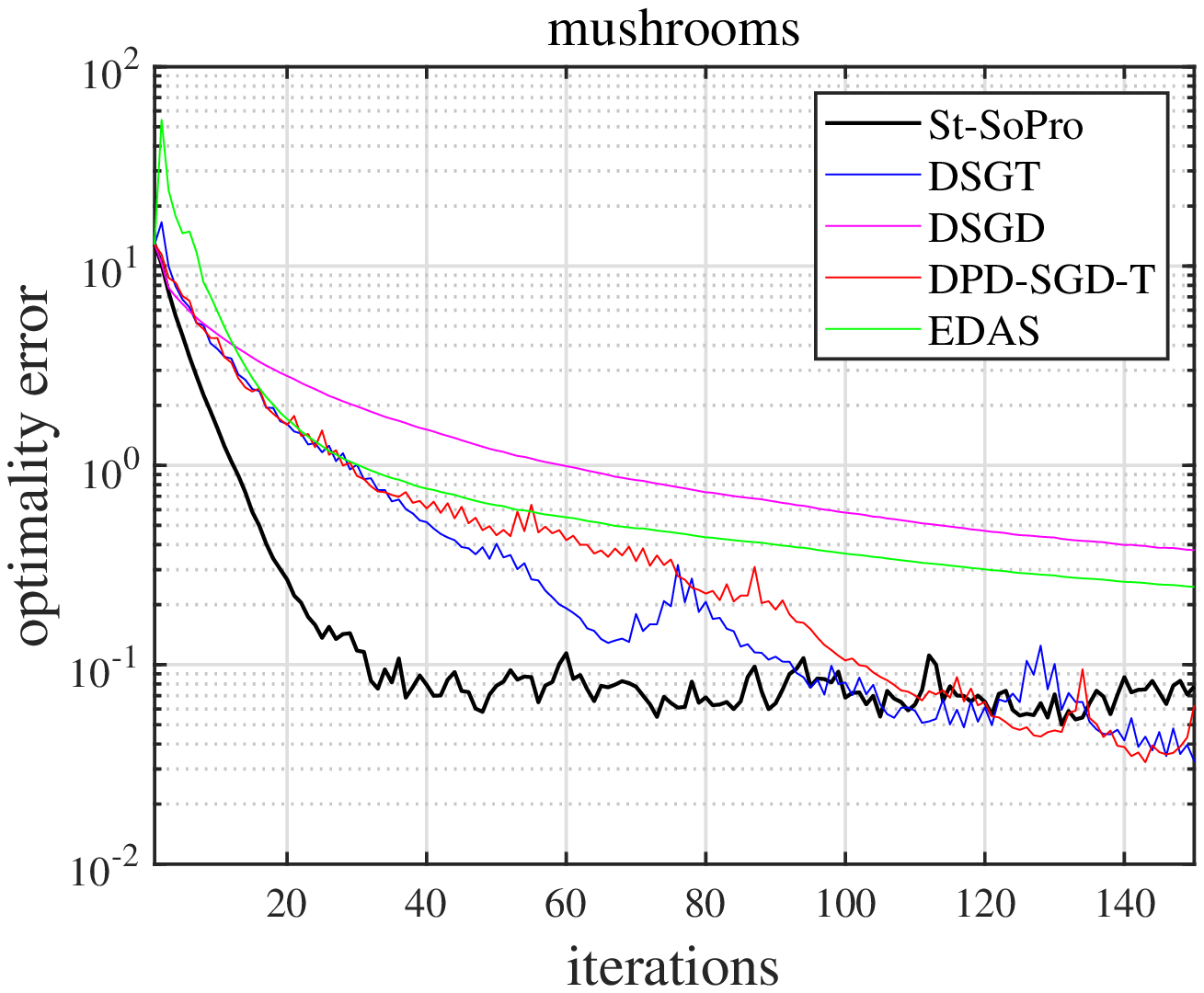}
		\label{mushrooms_iter}}
	\end{subfigure}
	\end{minipage}%
	\begin{minipage}{0.24\linewidth}
	\begin{subfigure}[]{
		\centering
		\includegraphics[width=1.67in,height=1.15in]{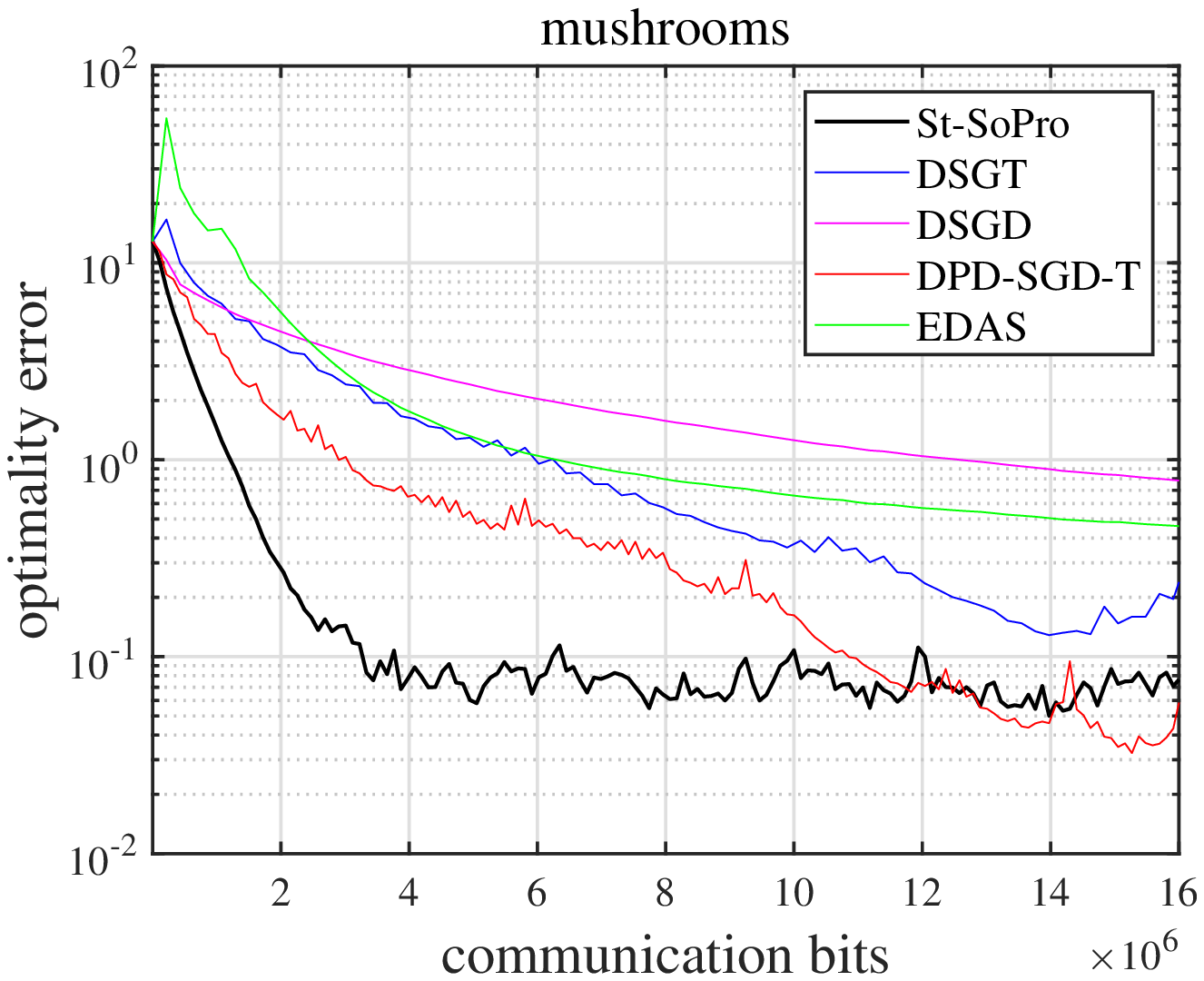}
		\label{mushrooms_comm}
		}\end{subfigure}
	\end{minipage}
	\begin{minipage}{0.24\linewidth}
	\begin{subfigure}[]{
		\centering
		\includegraphics[width=1.67in,height=1.15in]{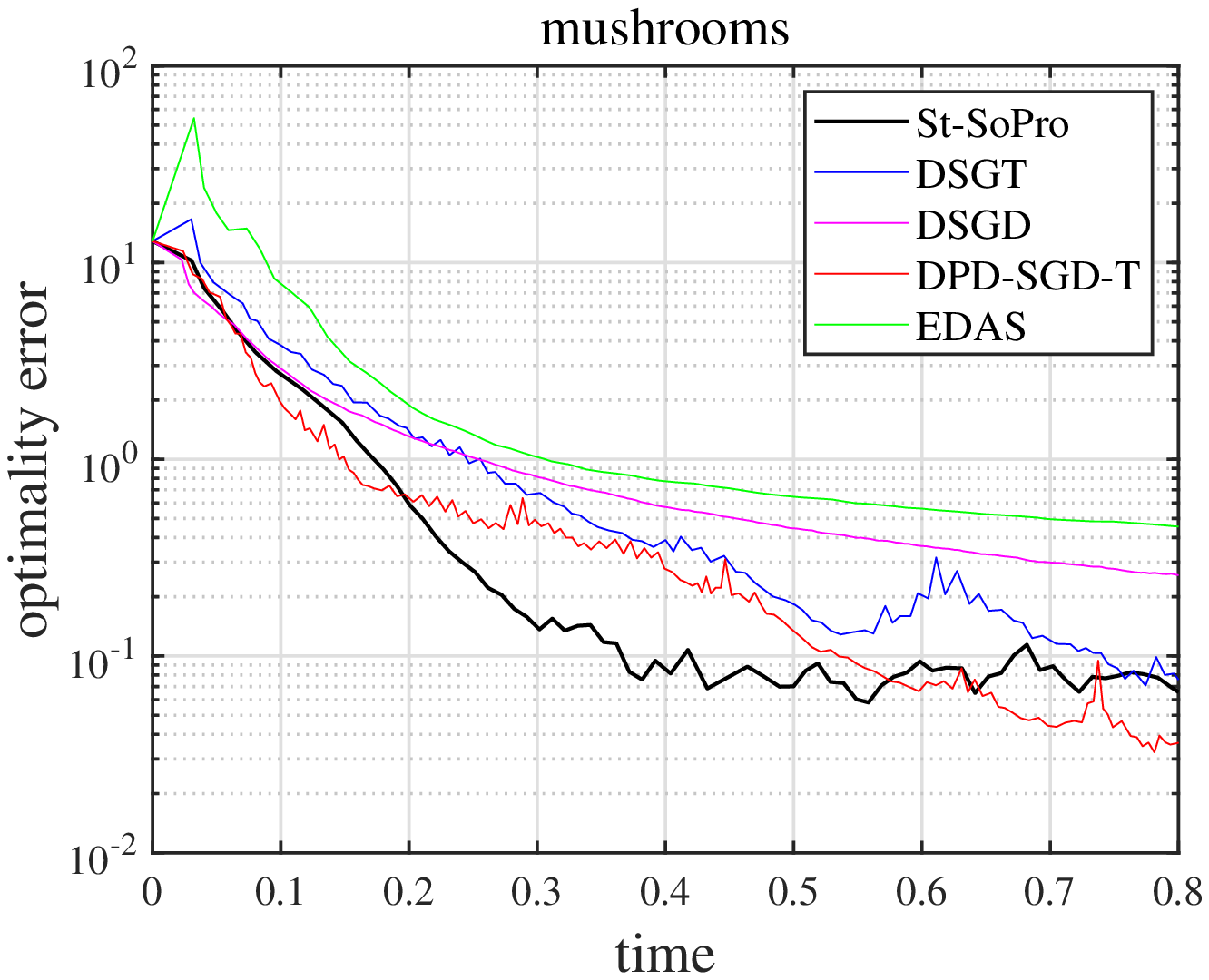}
		\label{mushrooms_time}
		}\end{subfigure}
	\end{minipage}
	\begin{minipage}{0.24\linewidth}
	\begin{subfigure}[]{
		\centering
		\includegraphics[width=1.67in,height=1.15in]{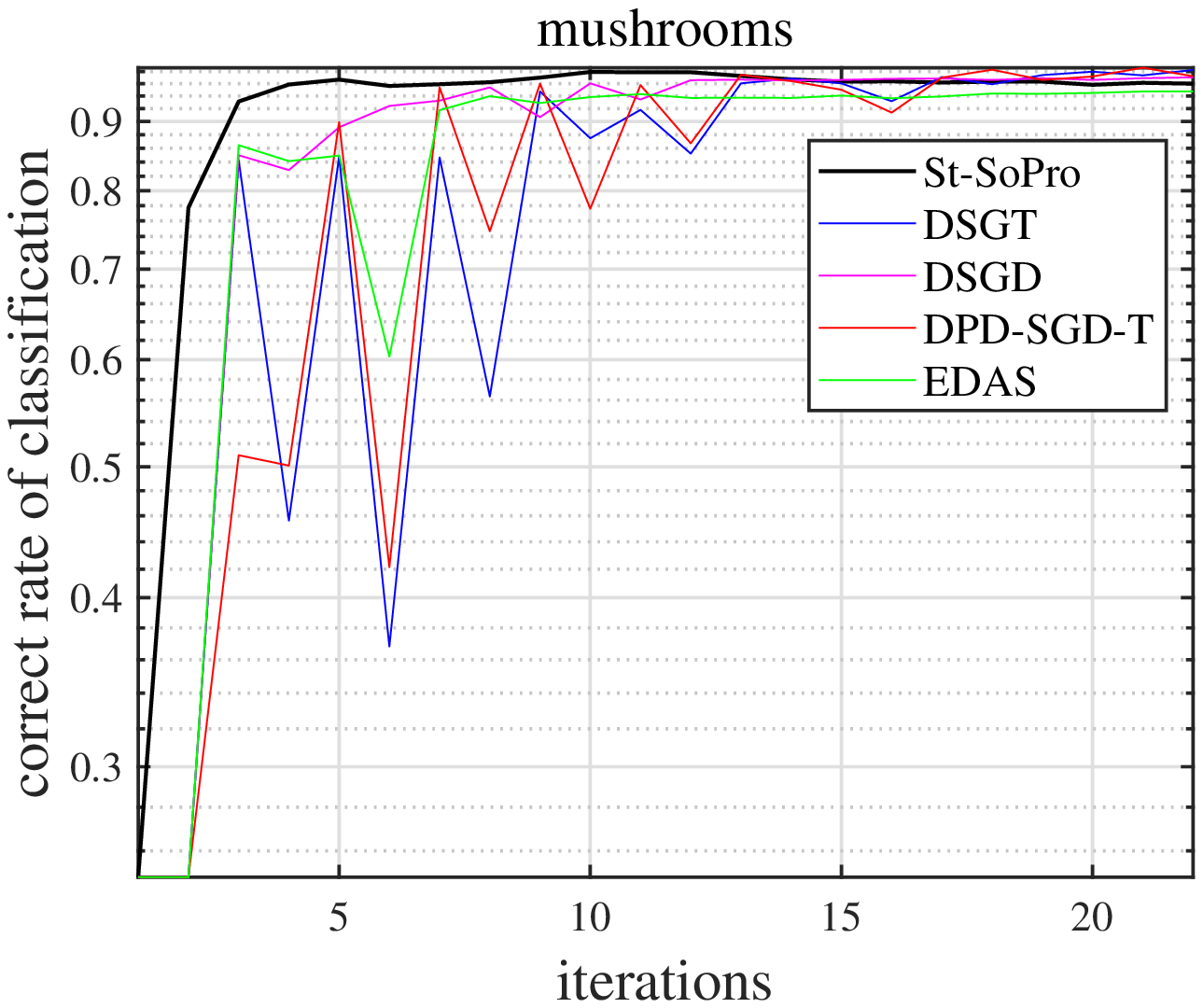}
		\label{mushrooms_acc}
		}\end{subfigure}
	\end{minipage}
	\caption{Convergence performance of St-SoPro, DSGT, DSGD, DPD-SGD-T and EDAS on dataset \emph{mushrooms}.\label{fig:mushrooms}}
\end{figure*}

\section{Conclusion}
We have developed St-SoPro, a distributed stochastic second-order proximal method, for addressing strongly convex and smooth optimization over undirected networks. Different from the existing first-order distributed stochastic algorithms, St-SoPro incorporates a second-order approximation of an augmented Lagrangian function and randomly samples each local gradient and Hessian. We show that St-SoPro linearly converges to a neighborhood of the optimal solution in expectation, and the neighborhood can be arbitrarily small. Simulations over two real datasets demonstrate that St-SoPro is both computationally and communication-wise efficient.

\appendix
\subsection{Proof of Theorem~\ref{theo: A-1}}\label{ssec:proof}

The following lemma intends to bound the difference between $\mathbf{E}\left[\left\|\mathbf{z}^{k}-\mathbf{z}^{\star}\right\|_{Q}^{2}\right]$ and $\mathbf{E}\left[\left\|\mathbf{z}^{k+1}-\mathbf{z}^{\star}\right\|_{Q}^{2}\right]$.

    \begin{lemma}\label{lem: A-1} For each $k\ge0$,
	\begin{align}
		    &\mathbf{E}\left[\left\|\mathbf{z}^{k}-\mathbf{z}^{\star}\right\|_{Q}^{2}\right]-\mathbf{E}\left[\left\|\mathbf{z}^{k+1}-\mathbf{z}^{\star}\right\|_{Q}^{2}\right] \notag \\
		    \geq& \beta\left(2 \eta_s m_{\beta}-c_0 \right) \mathbf{E}\left[\left\|\mathbf{x}^{k}-\mathbf{x}^{\star}\right\|^{2} \right] + \beta^{2}(1-\eta_s) \mathbf{E}\left[ \left\|\mathbf{x}^{k}\right\|_{W}^{2} \right] \notag \\
		    &-\beta \mathbf{E}\left[\left\|\mathbf{x}^{k+1}-\mathbf{x}^{k}\right\|_{\mathcal{A}_{c_0, \eta_s}+\beta W-R}^{2} \right] - 2  N\tau \sigma^2, \label{penultimate}
	\end{align}
    where $c_0>0$ can be arbitrary and $\mathcal{A}_{c_0, \eta_s}\coloneqq\Lambda_{M} /(2(1-\eta_s))+\left(\Lambda_{M}-\Lambda_{m}\right)^{2} /(4 c_0)+\Lambda_{M}-\Lambda_{m} + \frac{\beta I_{N d}}{2}$.
    \end{lemma}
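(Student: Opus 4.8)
The plan is to collapse the primal and dual updates into a single residual identity, insert it into the decrement of the $Q$-weighted distance to $\mathbf{z}^\star$, and then dominate the leftover cross terms by Young's inequality together with the available curvature and variance bounds. Write $\mathbf{d}^k=\mathbf{x}^{k+1}-\mathbf{x}^k$ and $e^k=g(\mathbf{x}^k)-\nabla f(\mathbf{x}^k)$, and let $\mathcal{F}^k$ be the history before the $k$-th sampling, so that $\mathbf{E}[e^k\mid\mathcal{F}^k]=\mathbf{0}_{Nd}$ and $\mathbf{E}[\|e^k\|^2]\le N\tau\sigma^2$ by \eqref{tau sigma^2}. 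Using $\mathbf{q}^k=W^{1/2}\mathbf{v}^k$, $W^{1/2}\mathbf{x}^\star=\mathbf{0}_{Nd}$ and $\nabla f(\mathbf{x}^\star)=-W^{1/2}\mathbf{v}^\star$ (from \eqref{vstar1}), the primal step of \eqref{xup_Ssopro1} rearranges to $W^{1/2}(\mathbf{v}^k-\mathbf{v}^\star)=-(h(\mathbf{x}^k)+D)\mathbf{d}^k-(\nabla f(\mathbf{x}^k)-\nabla f(\mathbf{x}^\star))-e^k-\beta W(\mathbf{x}^k-\mathbf{x}^\star)$. I then expand $\|\mathbf{z}^k-\mathbf{z}^\star\|_Q^2-\|\mathbf{z}^{k+1}-\mathbf{z}^\star\|_Q^2=\beta(\|\mathbf{x}^k-\mathbf{x}^\star\|_R^2-\|\mathbf{x}^{k+1}-\mathbf{x}^\star\|_R^2)+\|\mathbf{v}^k-\mathbf{v}^\star\|^2-\|\mathbf{v}^{k+1}-\mathbf{v}^\star\|^2$ via $\mathbf{x}^{k+1}=\mathbf{x}^k+\mathbf{d}^k$ and $\mathbf{v}^{k+1}-\mathbf{v}^\star=(\mathbf{v}^k-\mathbf{v}^\star)+\beta W^{1/2}(\mathbf{x}^{k+1}-\mathbf{x}^\star)$, substitute the residual identity to eliminate $\mathbf{v}^k-\mathbf{v}^\star$, write $h(\mathbf{x}^k)+D=R+(h(\mathbf{x}^k)-\tfrac{\Lambda_m+\Lambda_M}{2})$, split $\mathbf{x}^{k+1}-\mathbf{x}^\star=(\mathbf{x}^k-\mathbf{x}^\star)+\mathbf{d}^k$ in every cross term, and use $2a^TWb=\|a\|_W^2+\|b\|_W^2-\|a-b\|_W^2$ with $W\mathbf{x}^\star=\mathbf{0}_{Nd}$. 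The $R\mathbf{d}^k$ cross terms cancel and the $W$-terms collapse, leaving the \emph{exact} identity
\begin{align*}
&\|\mathbf{z}^k-\mathbf{z}^\star\|_Q^2-\|\mathbf{z}^{k+1}-\mathbf{z}^\star\|_Q^2=\beta\|\mathbf{d}^k\|_R^2-\beta^2\|\mathbf{d}^k\|_W^2+\beta^2\|\mathbf{x}^k\|_W^2\\
&\quad+2\beta(\mathbf{x}^{k+1}-\mathbf{x}^\star)^T\!\big(h(\mathbf{x}^k)-\tfrac{\Lambda_m+\Lambda_M}{2}\big)\mathbf{d}^k\\
&\quad+2\beta(\mathbf{x}^{k+1}-\mathbf{x}^\star)^T(\nabla f(\mathbf{x}^k)-\nabla f(\mathbf{x}^\star))+2\beta(\mathbf{x}^{k+1}-\mathbf{x}^\star)^Te^k.
\end{align*}

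The heart of the proof is lower-bounding the three cross terms, splitting $\mathbf{x}^{k+1}-\mathbf{x}^\star=(\mathbf{x}^k-\mathbf{x}^\star)+\mathbf{d}^k$ in each. For the Hessian-deviation term, $-\tfrac{\Lambda_M-\Lambda_m}{2}\preceq h(\mathbf{x}^k)-\tfrac{\Lambda_m+\Lambda_M}{2}\preceq\tfrac{\Lambda_M-\Lambda_m}{2}$ (from \eqref{hessian inequality}) bounds the $\mathbf{d}^k$-$\mathbf{d}^k$ piece below by $-\beta\|\mathbf{d}^k\|_{\Lambda_M-\Lambda_m}^2$, and Young's inequality with weight $c_0$ bounds the $(\mathbf{x}^k-\mathbf{x}^\star)$-$\mathbf{d}^k$ piece by $-\beta c_0\|\mathbf{x}^k-\mathbf{x}^\star\|^2-\tfrac{\beta}{4c_0}\|\mathbf{d}^k\|_{(\Lambda_M-\Lambda_m)^2}^2$ (using $(h(\mathbf{x}^k)-\tfrac{\Lambda_m+\Lambda_M}{2})^2\preceq\tfrac{(\Lambda_M-\Lambda_m)^2}{4}$ blockwise). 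For the gradient term, I split the $(\mathbf{x}^k-\mathbf{x}^\star)$-part into an $\eta_s$- and a $(1-\eta_s)$-portion: the $\eta_s$-portion, via \eqref{restricted stongly} and the identity $\langle\nabla f(\mathbf{x}^k)-\nabla f(\mathbf{x}^\star),\mathbf{x}^k-\mathbf{x}^\star\rangle=\langle\nabla f_\beta(\mathbf{x}^k)-\nabla f_\beta(\mathbf{x}^\star),\mathbf{x}^k-\mathbf{x}^\star\rangle-\tfrac{\beta}{2}\|\mathbf{x}^k\|_W^2$, gives $2\beta\eta_s m_\beta\|\mathbf{x}^k-\mathbf{x}^\star\|^2-\beta^2\eta_s\|\mathbf{x}^k\|_W^2$, while the $(1-\eta_s)$-portion is bounded below by $2\beta(1-\eta_s)\|\nabla f(\mathbf{x}^k)-\nabla f(\mathbf{x}^\star)\|_{\Lambda_M^{-1}}^2$ via co-coercivity of each $M_i$-smooth convex $f_i$ and exactly cancels the error produced by a Young bound of the $\mathbf{d}^k$-part with weight $\tfrac{\Lambda_M}{2(1-\eta_s)}$, leaving $-\beta\|\mathbf{d}^k\|_{\Lambda_M/(2(1-\eta_s))}^2$. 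For the noise term, $\mathbf{E}[2\beta(\mathbf{x}^k-\mathbf{x}^\star)^Te^k]=0$ since $\mathbf{x}^k$ is $\mathcal{F}^k$-measurable, and $2\beta(\mathbf{d}^{k})^Te^k\ge-\tfrac{\beta^2}{2}\|\mathbf{d}^k\|^2-2\|e^k\|^2$ by Young's inequality with weight $\tfrac{\beta^2}{2}I_{Nd}$, so after taking expectation this term is $\ge-\beta\|\mathbf{d}^k\|_{\beta I_{Nd}/2}^2-2N\tau\sigma^2$.

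Substituting these bounds and taking expectations, the $\|\mathbf{x}^k-\mathbf{x}^\star\|^2$ terms combine to coefficient $\beta(2\eta_s m_\beta-c_0)$, the $\|\mathbf{x}^k\|_W^2$ terms to $\beta^2(1-\eta_s)$, the noise contributes $-2N\tau\sigma^2$, and the various $\mathbf{d}^k$ terms assemble into $\beta\|\mathbf{d}^k\|_{R-\beta W-\Lambda_M/(2(1-\eta_s))-(\Lambda_M-\Lambda_m)^2/(4c_0)-(\Lambda_M-\Lambda_m)-\beta I_{Nd}/2}^2=-\beta\|\mathbf{d}^k\|_{\mathcal{A}_{c_0,\eta_s}+\beta W-R}^2$, which is precisely \eqref{penultimate}. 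The main obstacle is the bookkeeping in the cross-term step: the three Young weights ($c_0$ for the Hessian-deviation cross term, $\tfrac{\Lambda_M}{2(1-\eta_s)}$ for the $\mathbf{d}^k$-gradient cross term, and $\tfrac{\beta^2}{2}I_{Nd}$ for the $\mathbf{d}^k$-noise cross term) must be chosen so that the residual matrices collect exactly into $\mathcal{A}_{c_0,\eta_s}$ and the noise constant comes out $\beta$-free; and one must keep careful track that $\mathbf{x}^{k+1}$—hence $\mathbf{d}^k$—is correlated with both random sample sets, so that only $(\mathbf{x}^k-\mathbf{x}^\star)^Te^k$ may be zeroed by conditioning while every other piece has to be bounded deterministically before the expectation is taken.
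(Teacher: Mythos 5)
Your proposal is correct and follows essentially the same route as the paper: you derive the same exact expansion of the $Q$-weighted decrement (the paper's intermediate identity before bounding), then apply the identical three Young-type bounds — weight $c_0$ on the Hessian-deviation cross term, the $\eta_s/(1-\eta_s)$ split of the gradient cross term with co-coercivity cancelling the $\|\nabla f(\mathbf{x}^k)-\nabla f(\mathbf{x}^\star)\|_{\Lambda_M^{-1}}^2$ error and restricted strong convexity of $f_\beta$ supplying $m_\beta$, and the $\beta^2/2$ weight on the noise cross term yielding the $\beta$-free constant $2N\tau\sigma^2$. The only differences are presentational (you state the identity pathwise and make the conditioning argument for $\mathbf{E}[(\mathbf{x}^k-\mathbf{x}^\star)^Te^k]=0$ explicit, where the paper invokes unbiasedness), and the bookkeeping assembles into $\mathcal{A}_{c_0,\eta_s}+\beta W-R$ exactly as claimed.
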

    
\begin{proof}
We first equivalently expand the left-hand side of \eqref{penultimate}. Similar to \cite[Eq. (27)]{wuxuyang_sopro}, we derive 
	\begin{align}
		&\mathbf{E}\left[\left\|\mathbf{z}^{k}-\mathbf{z}^{\star}\right\|_{Q}^{2}\right]-\mathbf{E}\left[\left\|\mathbf{z}^{k+1}-\mathbf{z}^{\star}\right\|_{Q}^{2}\right] \notag\displaybreak[0]\\
		= &\mathbf{E}\left[\left\|\mathbf{z}^{k}-\mathbf{z}^{k+1}\right\|_{Q}^{2} \right] +2\mathbf{E}\left[ \beta\left\langle\mathbf{x}^{k+1}-\mathbf{x}^{\star}, R\left(\mathbf{x}^{k}-\mathbf{x}^{k+1}\right)\right\rangle\right] \notag\displaybreak[0]\\
		& + 2 \mathbf{E}\left[\left\langle\mathbf{v}^{k}-\mathbf{v}^{k+1}, \mathbf{v}^{k+1}-\mathbf{v}^{\star}\right\rangle \right].\label{equivalent expansion}
	\end{align} 
Then, using \eqref{vup1} and $W^{\frac{1}{2}} \mathbf{x}^{\star}=\mathbf{0}_{N d}$, we obtain $\langle\mathbf{v}^{k}-\mathbf{v}^{k+1}, \mathbf{v}^{k+1}-\mathbf{v}^{\star}\rangle=-\beta\langle \mathbf{x}^{k+1} - \mathbf{x}^{\star}, W^{\frac{1}{2}} (\mathbf{v}^{k+1} - \mathbf{v}^{\star})\rangle$. From \eqref{xup_Ssopro1} and \eqref{vup1}, we have $W^{\frac{1}{2}} \mathbf{v}^{k+1}=W^{\frac{1}{2}}\left(\mathbf{v}^{k+1}-\mathbf{v}^{k}\right)+W^{\frac{1}{2}} \mathbf{v}^{k}=(\beta W-\tilde{H}^{k})(\mathbf{x}^{k+1}-\mathbf{x}^{k})-g(\mathbf{x}^{k})$, where $\tilde H^k\coloneqq h(\mathbf{x}^k)+D$. The above two equations, together with \eqref{vstar1}, give
	\begin{align}
		&\langle\mathbf{v}^{k}-\mathbf{v}^{k+1}, \mathbf{v}^{k+1}-\mathbf{v}^{\star}\rangle=\beta\langle\mathbf{x}^{k+1}-\mathbf{x}^{\star}, g(\mathbf{x}^{k})-\nabla f(\mathbf{x}^{\star})\rangle \notag\displaybreak[0]\\
		&\qquad+\beta\langle\mathbf{x}^{k+1}-\mathbf{x}^{\star},(\tilde{H}^{k}-\beta W)(\mathbf{x}^{k+1}-\mathbf{x}^{k})\rangle. \label{v x1}
	\end{align} 
Moreover, based on \eqref{vup1}, $W \mathbf{x}^{\star}=\mathbf{0}_{N d}$, and \cite[Eq. (26)]{wuxuyang_sopro}, 
\begin{align}
&- 2 \beta \langle\mathbf{x}^{k+1}-\mathbf{x}^{\star}, \beta W(\mathbf{x}^{k+1}-\mathbf{x}^k)\rangle  \notag\displaybreak[0]\\
=& - \beta \|\mathbf{x}^{k+1}\|_{\beta W}^2 + \beta \|\mathbf{x}^k\|_{\beta W}^2- \beta \|\mathbf{x}^{k+1}-\mathbf{x}^k\|_{\beta W}^2 \notag\displaybreak[0]\\
=&- \|\mathbf{v}^{k+1}-\mathbf{v}^{k} \|^2 + \beta^2 \|\mathbf{x}^k\|_{ W}^2- \beta\|\mathbf{x}^{k+1}-\mathbf{x}^k\|_{\beta W}^2.\label{26 of 16}
\end{align}
By incorporating \eqref{26 of 16} into \eqref{v x1} and then combining the resulting equation with \eqref{equivalent expansion}, we have
\begin{align}
&\mathbf{E}\left[\|\mathbf{z}^{k}-\mathbf{z}^{\star}\|_{Q}^{2}\right] -\mathbf{E}\left[\|\mathbf{z}^{k+1}-\mathbf{z}^{\star}\|_{Q}^{2}\right]\notag\displaybreak[0]\\
=& 2 \beta \mathbf{E}\left[\langle\mathbf{x}^{k+1}-\mathbf{x}^{\star}, g(\mathbf{x}^{k})-\nabla f(\mathbf{x}^{\star})\rangle\right ]+\beta^{2}\mathbf{E}\left[\|\mathbf{x}^{k}\|_{W}^{2}\right] \notag\displaybreak[0]\\
&+2 \beta\mathbf{E}\left[\langle\mathbf{x}^{k+1}-\mathbf{x}^{\star},(\tilde{H}^{k}-R)(\mathbf{x}^{k+1}-\mathbf{x}^{k})\rangle\right]\notag\displaybreak[0]\\
&+\beta\mathbf{E}\left[\|\mathbf{x}^{k}-\mathbf{x}^{k+1}\|_{R-\beta W}^{2}\right].
\label{pre-lemma}
\end{align}

Subsequently, we provide a lower bound for the first term on the right-hand side of \eqref{pre-lemma}. To do so, we utilize the AM-GM inequality and \eqref{tau sigma^2} to derive
	\begin{align}
		&\mathbf{E} \left[\langle\mathbf{x}^{k+1}-\mathbf{x}^{k}, g(\mathbf{x}^{k})-\nabla f(\mathbf{x}^{\star})\rangle\right] \notag\displaybreak[0]\\ 
		= & \mathbf{E} \left[\langle\mathbf{x}^{k+1}-\mathbf{x}^{k}, \nabla f (\mathbf{x}^{k})-\nabla f(\mathbf{x}^{\star})\rangle\right] \notag\displaybreak[0]\\
		&+ \mathbf{E} \left[\langle\mathbf{x}^{k+1}-\mathbf{x}^{k}, g(\mathbf{x}^{k})-\nabla f(\mathbf{x}^{k})\rangle\right] \notag\displaybreak[0]\\ 
		\geq & -(1-\eta_s)\mathbf{E}\left[\left\|\nabla f(\mathbf{x}^{k}) - \nabla f(\mathbf{x}^{\star})\right\|_{\Lambda_{M}^{-1}}^{2} \right] \notag\displaybreak[0] \\
		&-  \mathbf{E}\left[\left\|\mathbf{x}^{k+1}-\mathbf{x}^{k}\right\|_{\frac{\Lambda_{M}}{4(1-\eta_s)}}^{2}\right] \notag\displaybreak[0]\\
		& -\frac{\beta}{4}\mathbf{E}\left[\left\|\mathbf{x}^{k+1}-\mathbf{x}^{k}\right\|_{}^2\right] - \frac{1}{\beta}\mathbf{E} \left[\left\|g(\mathbf{x}^{k}) - \nabla f(\mathbf{x}^{k})\right\|^2\right] \notag   \notag \displaybreak[0]\\
		\geq & -(1-\eta_s)\mathbf{E}\left[\left\|\nabla f(\mathbf{x}^{k}) - \nabla f(\mathbf{x}^{\star})\right\|_{\Lambda_{M}^{-1}}^{2} \right] \notag\displaybreak[0] \\
		&-  \mathbf{E}\left[\left\|\mathbf{x}^{k+1}-\mathbf{x}^{k}\right\|_{\frac{\Lambda_{M}}{4(1-\eta_s)}+ \frac{\beta I_{Nd}}{4}}^{2}\right] - \frac{1}{\beta}N \tau \sigma^2. \label{the second}
	\end{align}
Due to the Lipschitz continuity of each $\nabla f_{i}$ and the unbiasedness of $g(\mathbf{x}^{k})$, we have $\mathbf{E}[\langle\mathbf{x}^{k}-\mathbf{x}^{\star}, g(\mathbf{x}^{k})-\nabla f(\mathbf{x}^{\star})\rangle]=\mathbf{E}[\langle\mathbf{x}^{k}-\mathbf{x}^{\star}, \nabla f(\mathbf{x}^{k})-\nabla f(\mathbf{x}^{\star})\rangle]\geq\mathbf{E}[\|\nabla f(\mathbf{x}^{k})-\nabla f(\mathbf{x}^{\star})\|_{\Lambda_{M}^{-1}}^{2}]$. We multiply this inequality by $(1-\eta_s)$ and then add it to \eqref{the second}, which leads to
	\begin{align}
		&\mathbf{E} \left[\langle\mathbf{x}^{k+1}-\mathbf{x}^{\star}, g(\mathbf{x}^{k})-\nabla f(\mathbf{x}^{\star})\rangle \right] \notag\displaybreak[0]\\ 
		& - \eta_s \mathbf{E} \left[\langle\mathbf{x}^{k}-\mathbf{x}^{\star}, \nabla f(\mathbf{x}^{k})-\nabla f(\mathbf{x}^{\star})\rangle \right] \notag\displaybreak[0]\\
		\geq &-  \mathbf{E}\left[\left\|\mathbf{x}^{k+1}-\mathbf{x}^{k}\right\|_{\frac{\Lambda_{M}  }{4(1-\eta_s)}+ \frac{\mathbf{\beta I}_{Nd}}{4}}^{2}\right] -\frac{1}{\beta} N \tau \sigma^2. \label{first inequality 1}
	\end{align}
Because of the restricted strong convexity of $f_{\beta}(\mathbf{x}) = f(\mathbf{x}) + \frac{\beta}{4} \|\mathbf{x}\|_{W}^2 $ shown in Section~\ref{sec: convergence} and $W \mathbf{x}^{\star}=\mathbf{0}_{N d}$, we have $\mathbf{E}[\langle\mathbf{x}^{k}-\mathbf{x}^{\star}, \nabla f(\mathbf{x}^{k})-\nabla f(\mathbf{x}^{\star})\rangle]\geq m_{\beta} \mathbf{E}\left[\|\mathbf{x}^{k}-\mathbf{x}^{\star}\|^{2}\right] - \frac{\beta}{2}\mathbf{E}\left[\|\mathbf{x}^{k}\|_{W}^{2}\right]$. This, along with \eqref{first inequality 1}, results in
	\begin{align}
		&\mathbf{E} \left[\langle \mathbf{x}^{k+1}-\mathbf{x}^{\star} , g(\mathbf{x}^{k})-\nabla f(\mathbf{x}^{\star})\rangle \right] \notag\displaybreak[0]\\ 
		\geq &-  \mathbf{E}\left[\left\|\mathbf{x}^{k+1}-\mathbf{x}^{k}\right\|_{\frac{\Lambda_{M}  }{4(1-\eta_s)}+ \frac{\mathbf{\beta I}_{Nd}}{4}}^{2}\right] - \frac{1}{\beta} N \tau \sigma^2 \notag\displaybreak[0]\\
		& + \eta_s m_{\beta}\mathbf{E} \left[\|\mathbf{x}^{k}-\mathbf{x}^{\star}\|^{2} \right] - \frac{\eta_s\beta}{2}\mathbf{E}\left[\|\mathbf{x}^{k}\|_{W}^{2}\right]. \label{lower bound of first term}
	\end{align} 

Next, we bound the third term on the right-hand side of \eqref{pre-lemma}. Because $\tilde H^{k} - R=h(\mathbf{x}^{k})-\frac{\Lambda_{m}+\Lambda_{M}}{2}$ and because of \eqref{bound h}, we have $\frac{\Lambda_{m}-\Lambda_{M}}{2} \preceq \tilde H^{k}-R \preceq \frac{\Lambda_{M}-\Lambda_{m}}{2}$. Let $c_0>0$. Then, similar to \cite[Eq. (30)]{wuxuyang_sopro}, we obtain
	\begin{align}
		&\mathbf{E}\left[\langle\mathbf{x}^{k+1}-\mathbf{x}^{\star},(\tilde H^{k}-R)(\mathbf{x}^{k+1}-\mathbf{x}^{k})\rangle \right] \notag\displaybreak[0]\\
		\geq &-\frac{c_0}{2} \mathbf{E}\left[ \left\|\mathbf{x}^{k}-\mathbf{x}^{\star}\right\|^{2} \right] -\frac{1}{8 c_0} \mathbf{E}\left[ \left\|\mathbf{x}^{k+1}-\mathbf{x}^{k}\right\|_{\left(\Lambda_{M}-\Lambda_{m}\right)^{2}}^{2} \right]  \notag\displaybreak[0]\\
		&-\frac{1}{2} \mathbf{E}\left[ \left\|\mathbf{x}^{k+1}-\mathbf{x}^{k}\right\|_{\Lambda_{M}-\Lambda_{m}}^{2}\right].\label{A-second inequality}
	\end{align}

Combining \eqref{lower bound of first term} and \eqref{A-second inequality} with \eqref{pre-lemma} gives \eqref{penultimate}.
\end{proof}

In addition to Lemma~\ref{lem: A-1}, below we provide an upper bound on $\mathbf{E} \left[\| \mathbf{z}^k - \mathbf{z}^{\star}\|_{Q}^2\right]$. For any $c_1,c_2>0$, through \eqref{tau sigma^2}, \eqref{xup_Ssopro1}, \eqref{vstar1}, \eqref{v in s^o}, and the AM-GM inequality,
\begin{align}
& \mathbf{E} \left [\|\mathbf{v}^{k}-\mathbf{v}^{\star} \|^{2} \right]= \mathbf{E} \left [\| (W^{\dagger} )^{\frac{1}{2}} W^{\frac{1}{2}} (\mathbf{v}^{k}-\mathbf{v}^{\star} ) \|^{2} \right] \notag\displaybreak[0]\\
=& \mathbf{E} \left [\| (W^{\dagger} )^{\frac{1}{2}} \left(\tilde H^{k} (\mathbf{x}^{k}-\mathbf{x}^{k+1} )- \beta W \mathbf{x}^{k}-g (\mathbf{x}^{k} ) +\nabla f (\mathbf{x}^{k} ) \right.\right. \notag\displaybreak[0]\\
& \left.\left. -\nabla f (\mathbf{x}^{k} )+\nabla f (\mathbf{x}^{\star} ) \right) \|^{2}\right] \notag\displaybreak[0]\\
\leq & (1\!+\!c_{1})\mathbf{E} \left [\| (W^{\dagger} )^{\frac{1}{2}}\left( \tilde H^{k} (\mathbf{x}^{k}\!-\!\mathbf{x}^{k+1} )\!-\!g (\mathbf{x}^{k} )\!+\!\nabla f (\mathbf{x}^{k})\right)\|^{2}\right] \notag\displaybreak[0]\\
&+ (1+\frac{1}{c_{1}} ) \mathbf{E} \left [\|( W^{\dagger} )^{\frac{1}{2}}  \left(\beta W \mathbf{x}^{k} +  \nabla f (\mathbf{x}^{k} )-\nabla f (\mathbf{x}^{\star}) \right)  \|^{2} \right] \notag\displaybreak[0]\\
\leq & \frac{2(1+c_{1})}{\lambda_{W}}\mathbf{E} \left [ \|\mathbf{x}^{k+1}-\mathbf{x}^{k} \|_{ (\tilde H^{k} )^{2}}^{2}\right] + \frac{2(1+{c_{1}})}{\lambda_{W}}N \tau \sigma^2 \notag\displaybreak[0]\\
& + \beta^{2} (1+\frac{1}{c_{1}} ) (1+c_{2} ) \mathbf{E} \left [\|\mathbf{x}^{k} \|_{W}^{2} \right] \notag\displaybreak[0]\\
&+\frac{ (1+1 / c_{1} ) (1+1 / c_{2} )}{\lambda_{W}}\mathbf{E} \left [ \|\mathbf{x}^{k}-\mathbf{x}^{\star}\|_{\Lambda_{M}^{2}}^{2} \right],\notag 
\end{align}
leading to
	\begin{align}
		& \mathbf{E} \left[\| \mathbf{z}^k - \mathbf{z}^{\star}\|_Q^2\right] \notag\displaybreak[0]\\
		\leq & \frac{2(1+c_{1})}{\lambda_{W}}\mathbf{E} \left[\|\mathbf{x}^{k+1}-\mathbf{x}^{k}\|_{\left(\Lambda_{M}+D\right)^{2} }^{2}\right] +  \frac{2(1+{c_{1}})}{\lambda_{W}} N \tau \sigma^2\notag\displaybreak[0]\\
		&+ \beta^{2}(1+\frac{1}{c_{1}})(1+c_{2})\mathbf{E} \left[\|\mathbf{x}^{k}\|_{W}^{2} \right] \notag\displaybreak[0]\\
		&+\mathbf{E} \left[ \|\mathbf{x}^{k}-\mathbf{x}^{\star}\|_{\left(\beta R+\frac{\left(1+1 / c_{1}\right)\left(1+1 / c_{2}\right) \Lambda_{M}^{2}}{\lambda_{W}}\right)}^2\right]. \label{bound z}
	\end{align}
Pick an arbitrary $\delta_s \in (0,1)$. By subtracting \eqref{bound z} multiplied by $\delta_s$ from \eqref{penultimate}, we have
	\begin{align}
		&(1-\delta_s)\mathbf{E}  \left[\left\|\mathbf{z}^{k}-\mathbf{z}^{\star}\right\|_{Q}^{2} \right]-\mathbf{E} \left[\left\|\mathbf{z}^{k+1}-\mathbf{z}^{\star}\right\|_{Q}^{2} \right] \notag\displaybreak[0]\\
		&\geq\beta \mathbf{E} \left[\left\|\mathbf{x}^{k}-\mathbf{x}^{\star}\right\|_ {\mathbf{\Omega}_1}^{2}\right] + \Omega_2\mathbf{E} \left[\left\|\mathbf{x}^{k}\right\|_{W}^{2}\right]\notag\displaybreak[0]\\
		&-\mathbf{E} \left[\left\|\mathbf{x}^{k+1}-\mathbf{x}^{k}\right\|_{\mathbf{\Omega}_3}^{2}\right]-(\frac{2 \left(1+c_{1}\right)\delta_s }{\lambda_{W}} + 2 )N\tau \sigma^2,\label{pre-theorem 1}
	\end{align}
where $\mathbf{\Omega}_1 =\left(2 \eta_s m_{\beta} - c_0\right) I_{N d}-\delta_s(R + \frac{\left(1+ 1/c_{1}\right)\left(1+1 / c_{2}\right) \Lambda_{M}^{2}}{\beta \lambda_{W}})$, $\Omega_2 =\beta^{2}(1-\eta_s)-\delta_s \beta^{2}\left(1+1 / c_{1}\right)\left(1+c_{2}\right) $, and $\mathbf{\Omega}_3 = \frac{2\left(1+c_{1}\right)\delta_s}{\lambda_{W}}\left(\Lambda_{M}+D\right)^{2}+ \beta\left(\mathcal{A}_{c_0, \eta_s}+\beta W-R\right)$.
To make \eqref{rate} hold based on \eqref{pre-theorem 1}, it suffices to let $\mathbf{\Omega}_1 \succeq \mathbf{O}_{Nd}$, $\Omega_2 \geq 0$, $\mathbf{\Omega}_3 \preceq \mathbf{O}_{Nd}$, i.e.,
\begin{align}
  \delta_s \leq & \frac{2 \eta_s m_{\beta}-c_0}{\lambda_{\max }\left(R+\left(1+1 / c_{1}\right)\left(1+1 / c_{2}\right) \Lambda_{M}^{2} /\left(\beta \lambda_{W}\right)\right)}, \label{delta 1}\displaybreak[0]\\
	\delta_s \leq & \frac{1-\eta_s}{\left(1+1 / c_{1}\right)\left(1+c_{2}\right)}, \label{delta 2}\displaybreak[0]\\
	\delta_s \leq & \frac{\beta \lambda_{W} \kappa_{c_0, \eta_s}}{2\left(1+c_{1}\right)\left\|\Lambda_{M}+D\right\|^{2}}\label{delta 3}.
\end{align}
To guarantee the existence of $\delta_s \in (0,1)$ subject to \eqref{delta 1}--\eqref{delta 3}, we need $c_0<2\eta_s m_{\beta}$ and $\kappa_{ c_0, \eta_s }>0$. Note from \eqref{D} that $\kappa_{c_0,\eta_s}>0$ at $c_0=2\eta_s m_{\beta}$. Then, due to the continuity of $\kappa_{c_0, \eta_s}$ with respect to $c_0$, there is $c_0\in(0,2\eta_s m_{\beta})$ such that $\kappa_{c_0,\eta_s}>0$. Therefore, we ensure \eqref{rate} with $\delta_s$ given by \eqref{delta_s}. 

Finally, from \eqref{rate}, we have $\mathbf{E}\left[\|\mathbf{z}^{k}-\mathbf{z}^{\star}\|_{Q}^{2}\right]\le(1-\delta_s)^k\mathbf{E}\left[\|\mathbf{z}^{0}-\mathbf{z}^{\star}\|_{Q}^{2}\right]+\sum_{t=0}^{k-1}(1-\delta_s)^t\Gamma N \tau \sigma^2$, $\forall k\ge0$. It can thus be shown that \eqref{neighborhood} holds.

\bibliographystyle{IEEEtran}
\bibliography{reference}
\end{document}